
\documentclass[apam]{degruyter-plain}   
\journalnameshort{Page count}
\usepackage{tikz-cd}

%
%

\renewcommand{\Im}{\mathop{\mathrm{Im}}}
\renewcommand{\Re}{\mathop{\mathrm{Re}}}

\newcommand{\Fhyp}{{}_1\mkern-2mu\hbox{$F_2$}}

\title{Operational calculus and integral transforms for
groups with finite propagation speed}
\headlinetitle{Functional calculus and integral transforms}

\lastnameone{Blower}
\firstnameone{Gordon}
\nameshortone{G.~Blower}
\addressone{Department of Mathematics and Statistics, Lancaster
University, Lancaster LA1 4YF}
\countryone{UK}
\emailone{g.blower@lancaster.ac.uk}

\lastnametwo{Doust}
\firstnametwo{Ian}
\nameshorttwo{I.~Doust}
\addresstwo{School of Mathematics and Statistics, UNSW Australia, Sydney NSW 2052}
\countrytwo{Australia}
\emailtwo{i.doust@unsw.edu.au}

\abstract{Let $A$ be the generator of a strongly continuous cosine family $(\cos (tA))_{t\in {\bf R}}$ on a complex Banach space $E$. The paper develops an operational calculus for integral transforms and functions of $A$ using the generalized harmonic analysis associated to certain hypergroups. It is shown that characters of hypergroups which have Laplace representations give rise to bounded operators on $E$. Examples include the Mellin transform and the Mehler--Fock transform. The paper uses
functional calculus for the cosine family $\cos( t\sqrt {\Delta})$ which is
associated with waves that travel at unit speed. The main results include an operational calculus theorem for Sturm--Liouville hypergroups with Laplace representation as well as analogues to the Kunze--Stein phenomenon in the hypergroup convolution setting.}

\keywords{Operator groups, multipliers, hypergroups}

\classification{Primary: 47A60; Secondary: 47D09}

\researchsupported{This research was partially supported by a Scheme~2 Grant from the London Mathematical Society.}

\acknowledgments{GB thanks the University of New South Wales for hospitality.}

\begin{document}

\section{Introduction}

Let $E$ be a separable complex Banach space and $\mathcal{L}(E)$ the algebra of bounded linear operators on $E$. Let $A$ a closed and densely defined linear operator in $E$. This paper presents a unified approach to the operational calculus
of functions $f(A)$  which is based upon integral transforms, including those in the following table.
\begin{table}[h]
\scalebox{0.85}{
\begin{tabular}{ccccc}
Transform& Characters& $L$    & $\phi_A (t)$&  Operations\\
\hline
\noalign{\vskip 2mm}
 Fourier & $\cos tx$& $-\frac{d^2}{dx^2} $& $\cos tA$ & cosine \\
Mellin  & $x^{it}$& $-(x\frac{d}{dx} )^2$ & $A^{it}$& Riesz potentials \\
Hankel & $x^{-\nu}J_\nu (\lambda x)$& $-\frac{d^2}{dx^2} -\frac{2\nu+1}{x}\frac{d}{dx}$ & $t^{-\nu }J_\nu (tA)$& Bessel \\
Mehler & $P_{i\lambda-(1/2)}^{0}(\cosh x)$& $-\frac{d^2}{dx^2} - \mathrm{coth} \, x\, \frac{d}{dx }$& ${U_{1/2}(\cos (tA)})$& Legendre
\end{tabular}
}
\end{table}

Associated to the differential operators $L$ that appear in this table there is a convolution $\ast$ defined initially on point masses $\varepsilon_x$ on ${\bf X} = [0,\infty)$ such that the convolution $\varepsilon_x \ast \varepsilon_y$ is a probability measure on ${\bf X}$. This convolution determines a hypergroup structure denoted $({\bf X},\ast)$. The characters $\phi$ of this hypergroup are multiplicative in the sense that they satisfy $\int_{\bf X} \phi(t) (\varepsilon_x \ast \varepsilon_y)(dt) = \phi(x) \phi(y)$. Working with the character space ${\hat {\bf X}}$ allows us to use generalized harmonic analysis to transfer estimates for $\sqrt{L}$ to $A$. (We refer the reader to \cite{Gigante:2001, Coifman:1976} for related transference methods.)
In Section~\ref{HypGps} we introduce the main facts from the theory of hypergroup structures on ${\bf X}$ that we shall need.

In Section~\ref{Sec:Mellin} we begin by investigating a classical situation regarding operators $A$ which admit bounded imaginary powers $A^{is}$ and a functional calculus derived from the Mellin transform. In the context of this work, the Mellin transform can be viewed as the generalized Fourier transform determined by a certain natural hypergroup structure on ${\bf X}$ and the imaginary powers of $A$ are just the values of $\phi(A)$ for $\phi$ in the character space of this hypergroup.

The remaining part of the paper aims to make formal use of the hypergroup Fourier transform formula
  \begin{equation}\label{EqGFT}
     {\hat f}(\phi) = \int_{\bf X} f(x) \phi(x) \, m(dx), \qquad (\phi \in {\hat {\bf X}},\ f \in L^1(m))
  \end{equation}
to define ${\hat f}(A)$. To do this, one needs to find a suitable way of replacing the scalar-valued $\phi(x)$ term with an operator-valued quantity $\phi_A(x)$. Here we make use of the fact that for certain hypergroups $({\bf X},\ast)$, the bounded multiplicative maps on ${\bf X}$ can be naturally parameterized as $\{\phi_\lambda\}$ for $\lambda$ in a subset of the complex plane, and furthermore, that for all $x \in {\bf X}$ the function $h_x(\lambda) = \phi_\lambda(x)$ is bounded and analytic on a suitable domain. Indeed these maps have a `Laplace representation' in terms of a family of bounded positive measures $\tau_x$,
  \begin{equation}\label{EqChRep}
      \phi_\lambda(x) = h_x(\lambda) = \int_{-x}^x \cos(\lambda t) \, \tau_x(dt), \qquad (x \in {\bf X}).
  \end{equation}
To make use of this representation to define $\phi_A(x) = h_x(A)$, one needs a satisfactory interpretation of, and bounds for, the family of operators $\{\cos(tA)\}_{t \in {\bf R}}$, as well as suitable bounds concerning the representation measures $\tau_x$. 

Cosine families of operators have a well-developed theory. Formally, a cosine family on $E$ is a strongly continuous family $\{C(t)\}_{t \in {\bf R}}$ of bounded operators on $E$ such that $C(s-t)+C(s+t) = 2C(s)C(t)$ and $C(0) = I$. Such a family admits a closed densely defined infinitesimal generator $A$ and one naturally writes $\cos(tA)$ for $C(t)$.  Cosine families arise in describing the solutions of well-posed $L^2$ Cauchy problems of the form
    \[ \frac{\partial^2w}{\partial t^2} = -A^2w, \qquad
  w(0)=u, \qquad \frac{\partial w}{\partial t}(0)=0
 \]
with initial datum $u\in L^2$. 
In classical situations, these systems admit wave solutions which propagate at a fixed finite speed.
We refer the reader to \cite{Travis:1978}, \cite[p. 118]{Goldstein:1985} or \cite{CheegerGT:1982} for further details.

Given a cosine family $\{\cos(tA)\}_{t \in {\bf R}}$, various authors (see, for example, \cite{CheegerGT:1982} or \cite{Taylor:1989}) have used this to use this to define
     \begin{equation}\label{Eq1.1}
     f(A)={\frac{1}{2\pi}}\int_{-\infty}^\infty {\cal F}f (t  ) \cos (t  A)\, dt  
     \end{equation}
where ${\cal F}f(t  )=\int_{-\infty}^\infty f(x)e^{-ixt }dx$ and $f$ is an even function in $C_c^\infty ({\bf R})$. Such an approach works well if, for example, the cosine family is uniformly bounded, but in general such familes are not so well-behaved. Even in the case that $E$ is an $L^p$ space
\begin{enumerate}
   \item[(i)] $\Vert  \cos (t  A)\Vert_{{\cal L}(L^2)}$ can grow exponentially with $\vert t \vert$ (see \cite[p. 118]{Goldstein:1985});
   \item[(ii)]  $\cos (t  A)$ can be unbounded as an operator on $L^p$ for $2<p<\infty$.
\end{enumerate}

In Section~\ref{Sec3} we give general conditions on $({\bf X},\ast)$ and $\{\cos(tA)\}$ which ensures even in the case that 
$\Vert  \cos (t  A)\Vert_{{\cal L}(L^2)}$ grows exponentially,  the family of  operators $\{\phi_A(x)\}$ is uniformly bounded and hence we can use (\ref{EqGFT}) to show that ${\hat f}(A)$ is bounded for all $f \in L^1(m)$. In Section~\ref{Sec:SLH} we show that certain Sturm--Liouville hypergroups associated to a differential operator $L$  do indeed have the desired properties. 

Several standard integral transforms appear from appropriate choices of hypergroup structure on ${\bf X}$. In Section~\ref{Sec:MF} we look at the hypergroup structure associated to the operator
  \[ L\phi(x) = -\phi''(x) - \coth x\, \phi'(x), \qquad (x \ge 0) \]
which generates the Mehler--Fock transform of order zero. In this setting, the operators $\phi_A(x)$ arise as fractional integrals of the cosine family. In the final section we show that the hypergroups associated to naturally occuring Laplace operators on certain Riemannian manifolds have the required properties for the earlier theory to apply.

For a locally compact group $G$, the space $L^1(G)$ acts boundedly on 
$L^2(G)$ by left-convolution. That is, if $f\in L^1(G)$ then $\Lambda_f: g\mapsto f\ast g$ is a bounded operator on $L^2(G)$. In general, this result does not extend to $f\in L^p(G)$ for $p > 1$. The Kunze--Stein phenomenon refers to the fact that for certain Lie groups, most classically for $G = \mathrm{SL}(2, {\bf C})$, for $1\leq p<2$ one does obtain a bound of the form 
   \[ \Vert f\ast g\Vert_{L^2(G)}\leq C_p \Vert f\Vert_{L^p(G)} \Vert g\Vert_{L^2(G)};\]
see \cite[p. 52]{Coifman:1976}. Thus the representation $\Lambda :(L^1(G),\ast )\rightarrow {\cal L}(L^2(G))$ 
$:f\mapsto \Lambda_f$ extends to a bounded linear map $\Lambda :L^p(G)\rightarrow {\cal L}(L^2(G))$.

Our main results Theorems \ref{Thm3.7} and \ref{Thm4.3} are analogues of this Kunze--Stein phenomenon. Indeed the classical case of $G = \mathrm{SL}(2, {\bf C})$ contains much of the hypergroup architecture that we explore in this paper. As is discussed in \cite{Jewett:1975},
$\mathrm{SL}(2, {\bf C})$ has a maximal compact
subgroup $K={\hbox{SU}}(2, {\bf C})$ such that $K\times K$ acts upon $G$ via
$(h,k):g\mapsto h^{-1}gk$ for $h,k\in K$ and $g\in G$, producing a space
of orbits $G//K=\{ KgK: g\in G\}$. The double coset space $G//K$ inherits the structure of a
commutative hypergroup modelled on ${\bf X}=[0, \infty )$ and as for the Sturm--Liouville hypergroups, we obtain representations linked to eigenfunctions of a differential operator on $(0, \infty)$. 
The reader is referred to Chapter~10 of \cite{Coifman:1976} for further details.

The functional calculus maps defined above factor through the Banach algebras $(L^1({\bf X},m),\ast)$.
In Theorem \ref{Thm3.7}, we produce a family of hypergroup representations
$\Phi: (L^1({\bf X},m), \ast )\rightarrow {\cal L}(E)$ that automatically extend to $\Phi : L^p({\bf X}, m)\rightarrow {\cal L}(E)$ for $1\leq p<2$. In Theorem \ref{Thm4.3} we obtain a version of this abstract theorem which applies to differential operators $L$ on $(0, \infty )$, as in the double coset hypergroup ${\bf X}=G//K$. We show that the space of bounded and multiplicative functions $\varphi_\lambda :({\bf X},\ast )\rightarrow {\bf C}$ is a strip $\{\lambda\in {\bf C}: \vert \Im \lambda\vert <\omega_0\}$, where $\omega_0>0$ is determined by $L$. 
The proof involves functional calculus for the cosine families and the Laplace representation, and was suggested by the results in \cite[p. 42]{CheegerGT:1982}.

Before progressing further, we shall fix some notation.
For $\omega >0$ we let $\Sigma_\omega$
denote the strip $\{ z\in {\bf C}: \vert \Im z\vert <\omega \}$ and $i\Sigma_\omega$ the corresponding vertical strip.
For $0<\theta <\pi$, we introduce the open sector
$S_\theta^0= \{z\in {\bf C}\setminus \{ 0\}: \vert \arg z\vert <\theta \}$
and its reflection $-S_\theta^0=\{ z:-z\in S_\theta^0\}$.
An important idea
is to work with holomorphic functions on `Venturi' regions;
that is, those of the form
  \[ V_{\theta,\omega }=\Sigma_\omega \cup S_\theta^0
\cup (-S_\theta^0).\]
Likewise, $iV_{\theta,\omega }$ will denote the corresponding Venturi region with vertical axis. As usual, $H^\infty(S)$ will denote the Banach algebra of bounded analytic functions on an open subset $S$ of the complex plane.

\section{Hypergroups on $[0,\infty)$}\label{HypGps}

In this section we introduce the general formalism of hypergroups with base space $[0,\infty)$. 
A full account of harmonic analysis in the hypergroup context may be found in \cite{Bloom:1994},
\cite{Jewett:1975} or \cite{Zeuner:1989}.

Let ${\bf X}$ denote the half-line $[0,\infty)$, and $C_c({\bf X} )$ the space of compactly supported continuous functions $f:{\bf X} \rightarrow {\bf C}$.
The set $M^b({\bf X})$ of bounded Radon measures on ${\bf X}$ with the weak topology forms
a complex vector space. When equipped with a suitable associative multiplication
or `generalized convolution' operation $\ast$ on $M^b({\bf X})$, this
convolution measure algebra is called a hypergroup or `convo'. We shall usually denote this as $({\bf X},\ast)$ although one needs to remember that the
operations are defined on $M^b({\bf X} )$ rather than the underlying
base space ${\bf X}$.

Denote the Dirac point mass at $x$ by $\varepsilon_x \in
M^b({\bf X})$. It is a hypergroup axiom that for all $x,y \in
{\bf X}$, $\varepsilon_x \ast \varepsilon_y$ is a
compactly supported probability measure. The action of $\ast$ in a
hypergroup is in fact completely determined by the convolutions $\varepsilon_x
\ast \varepsilon_y$. When the base space is
${\bf X} =[0, \infty )$, the convolution $\ast$ is necessarily commutative, $\varepsilon_0$
is a multiplicative identity element. In general, hypergroups admit an involution map $x\mapsto x^{-}$. 
For $x \in {\bf X}$, the left translation operator $\Lambda_x$ is defined,
initially on $C_c({\bf X})$ by
  \[ \Lambda_x f(y)
    = \int_{{\bf X}} f(t)\, (\varepsilon_x\ast \varepsilon_y)(dt)
                \qquad (x,y\in {\bf X}).\]
It is traditional and useful to write $\Lambda_x f(y)$ as $f(x \ast y)$ (although this is not in fact defining an operation on $X$).
Since $\ast$ is commutative, there exists an
essentially unique Haar measure on ${\bf X}$; that is, a nontrivial positive invariant measure $m$ on
$[0,\infty)$ satisfying
  \begin{equation*}\label{Eq3.1}
  \int_{{\bf X}} \Lambda _xf(y)\, m(dy)
      =\int_{{\bf X}} f(y)\, m(dy)\qquad (x\in {\bf X}).
  \end{equation*}
for all $f \in C_c({\bf X})$; see \cite[Section~1.3]{Bloom:1994}. This allows us to define a (commutative) convolution between two functions $f,g \in C_c({\bf X})$ by
  \[ (f \ast g)(x) = \int_{\bf X} f(y)\, \Lambda_xg(y) \, m(dy)
                    = \int_{\bf X} f(y)\, g(x \ast y) \, m(dy). \]
This map extends to $L^1(m)=L^1({\bf X}, m)$ and makes $(L^1(m),\ast)$ into a commutative Banach algebra. One often writes 
the convolution operation as $\Lambda_fg=f\ast g$ for $f,g\in L^1(m)$.

\begin{definition}\label{Defn3.1} 
\begin{enumerate}
   \item A continuous function $\phi: {\bf X} \to {\bf C}$ is said to be \textit{multiplicative} if $\phi(x \ast y) = \phi(x) \phi(y)$ for 
all $x,y \in {\bf X}$ and $\phi (z)\neq 0$ for some $z\in {\bf X}$. 
   \item A \textit{character} on the hypergroup ${\bf X}$ is a bounded and multiplicative function $\phi$ such that $\phi(x^{-}) = \overline{\phi(x)}$  and $\phi(0) = 1$. The \textit{character space} $\hat{\bf X}$ is the set of all characters on ${\bf X}$.
\end{enumerate}
\end{definition}

When $X = [0,\infty)$ the involution is always the identity $x^{-}=x$, and the condition that $\phi(x^{-}) = \overline{\phi(x)}$ is equivalent to the condition that $\phi(x) \in {\bf R}$ by \cite[Theorem 3.4.2]{Bloom:1994} and this simplifies some of the definitions below. In section 3, we use multiplicative functions which are bounded but not characters. In the cases of interest to us in this paper, Definition~\ref{Def4.1}, the hypergroup convolution is associated with a differential operator and the multiplicative functions are eigenfunctions of this operator. Indeed, the set of bounded and multiplicative functions $\phi_\lambda$ can be naturally parametrized by a domain $S_{\bf X} \subseteq {\bf C}$. This occurs, in particular, for Sturm--Liouville hypergroups, in which case $\lambda$ is a spectral parameter as in \cite{Bloom:1994},\cite{Chebli:1974},\cite{Chebli:1979}  and \cite{Trimeche:1981}.  
The character space $\hat{\bf X}$ is always sufficiently large in our context to enable one to do
harmonic analysis.
We can define the Fourier transform of $f \in
L^1({\bf X} ;m)$ by setting
  \begin{equation} \label{Eq3.2}
   {\hat f}(\phi) = \int_{{\bf X}} f(x) \phi(x)  \, m(dx), \qquad (\phi \in \hat{\bf X}).
  \end{equation}
In the case that $\hat{\bf X} \subseteq \{\phi_\lambda \,:\, \lambda \in S_{\bf X}\}$ we shall write ${\hat f}(\lambda)$ rather than ${\hat f}(\phi_\lambda)$ and we can extend ${\hat f}$ to be a function of the complex variable $\lambda$.

By a theorem of Levitan \cite{Jewett:1975}, there exists a unique
Plancherel measure $\pi_0$ supported on a closed subset ${\bf S}$ of
$\hat {{\bf X}}$ such that $f\mapsto \hat f$ for $f\in L^2(m)\cap L^1(m)$ extends to a unitary isomorphism $L^2(m)\rightarrow L^2(\pi_0)$. By \cite[Theorem 2.3.19]{Bloom:1994} or \cite{Voit:1988}, there exists a unique
positive character
$\phi_0\in {\bf S}$, and $\phi_0$ can be different from the trivial character ${\bf I}$. Indeed, this enables us to deal with unbounded cosine families, as in Proposition~\ref{Prop3.5} below.

\begin{definition}\label{Defn3.2} 
A hypergroup $({\bf X},\ast)$ is said to have a {\sl Laplace representation} if $(a,b)\subseteq {\bf S}$ for some $0<a<b$, and for every $x \ge 0$, there exists
 a positive Radon measure $\tau_x$
on $[-x,x]$ such that $\tau_x([-x,x])=\phi_0(x)$ and for every character $\phi_\lambda$ in ${\bf S}$
  \begin{equation}\label{Eq3.3}
    \phi_\lambda (x)=\int_{-x}^x\cos (\lambda t)\tau_x(dt).
  \end{equation}
  The integral is taken over $[-x,x]$, and includes any point masses at $\pm x$.
\end{definition}

The Sturm--Liouville hypergroups that we shall consider in Section~\ref{Sec:SLH} all admit a Laplace representation. 
For the rest of this section therefore, we assume that $({\bf X},\ast)$ has a Laplace representation. Note that the right-hand side of (\ref{Eq3.3}) converges for all $\lambda \in {\bf C}$ and all $x  \ge 0$, so the Laplace representation allows us to move from the character space to a larger subset of ${\bf C}$.

\begin{lemma}\label{3.4(i)}
Let ${\bf X}$ be as in Definition \ref{Defn3.2}. Suppose that there exist $M_0, \omega_0>0$ such that
  \begin{equation}\label{Eq3.4}
     \int_{-x}^x \cosh (\omega_0 t)\, \tau_x(dt)\leq M_0\qquad (x\geq 0).
   \end{equation}
Then
\begin{enumerate}
 \item for all $\lambda \in \Sigma_{\omega_0}$ the function $\phi_\lambda: {\bf X} \to {\bf C}$,
   \[ \phi_\lambda(x) = \int_{-x}^x\cos (\lambda t)\, \tau_x(dt) \qquad (x\geq 0)\]
  is bounded and multiplicative;
 \item for all $x \in {\bf X}$, the map $h_x: \lambda \mapsto \phi_\lambda(x)$ is in $H^\infty(\Sigma_{\omega_0})$;
  \item ${\bf R}\cup [-i\omega_0, i\omega_0]$ is contained in $\hat {\bf X}$;
 \item the Fourier transform $f \mapsto {\hat f}$ is bounded $L^1(m) \to H^\infty(\Sigma_{\omega_0})$.
\end{enumerate}
\end{lemma}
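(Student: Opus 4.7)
The plan is to drive everything from the elementary estimate $|\cos(\lambda t)|^2 = \cosh^2(\Im \lambda \cdot t) - \sin^2(\Re \lambda \cdot t) \leq \cosh^2(\omega_0 t)$, valid for $\lambda \in \overline{\Sigma_{\omega_0}}$ and $t \in {\bf R}$. Combined with hypothesis (\ref{Eq3.4}), this shows $\cos(\lambda t)$ is dominated by the $\tau_x$-integrable function $\cosh(\omega_0 t)$ uniformly in $\lambda \in \overline{\Sigma_{\omega_0}}$ and $x \geq 0$, which licenses all the Morera, Fubini, and dominated-convergence manipulations below.

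For (1) and (2), the domination gives $|\phi_\lambda(x)| \leq M_0$ at once. Holomorphy of $h_x$ on $\Sigma_{\omega_0}$ follows from Morera: on any triangular contour $\Delta \subset \Sigma_{\omega_0}$ one uses Fubini to move $\oint_{\partial \Delta}$ inside the $\tau_x$ integral and kills it by entireness of $\cos(\lambda t)$. To promote multiplicativity from the interval $(a,b) \subseteq {\bf S}$ to the whole strip, I fix $x,y \in {\bf X}$ and set $F(\lambda) = \int_{\bf X} \phi_\lambda(z)\,(\varepsilon_x \ast \varepsilon_y)(dz) - \phi_\lambda(x)\phi_\lambda(y)$; since $\varepsilon_x \ast \varepsilon_y$ has compact support and $|h_z| \leq M_0$, $F \in H^\infty(\Sigma_{\omega_0})$, and $F$ vanishes on $(a,b)$ where $\phi_\lambda$ is a hypergroup character, so the identity theorem forces $F \equiv 0$. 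Continuity of $x \mapsto \phi_\lambda(x)$, required in Definition~\ref{Defn3.1}, I would handle by Vitali's theorem applied to $G_n(\lambda) := \phi_\lambda(x_n) - \phi_\lambda(x_0)$ for $x_n \to x_0$: this family is uniformly bounded in $H^\infty(\Sigma_{\omega_0})$ and vanishes pointwise on $(a,b)$ by character continuity on ${\bf S}$, hence uniformly on compacts of $\Sigma_{\omega_0}$.

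For (3), on ${\bf R} \cup [-i\omega_0, i\omega_0]$ the kernel $\cos(\lambda t)$ is real, so $\phi_\lambda(x) \in {\bf R}$; together with the trivial involution $x^- = x$ this yields the conjugation axiom, and $\phi_\lambda(0) = 1$ follows from $\tau_0 = \varepsilon_0$ (since $\tau_0(\{0\}) = \phi_0(0) = 1$). At the closed boundary points $\pm i\omega_0$, multiplicativity and continuity in $x$ transfer from the open strip by dominated convergence along $\lambda_n \to \pm i\omega_0$ with $\lambda_n \in \Sigma_{\omega_0}$, again with $\cosh(\omega_0 t)$ as dominating function. Part (4) is then immediate: $|\hat f(\lambda)| \leq M_0 \|f\|_{L^1(m)}$, and a second Morera--Fubini argument applied to $\hat f(\lambda) = \int_{\bf X} f(x)\,h_x(\lambda)\,m(dx)$ yields holomorphy of $\hat f$ on $\Sigma_{\omega_0}$. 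The principal difficulty is the passage from multiplicativity on the short real interval $(a,b)$ to multiplicativity on the whole open strip $\Sigma_{\omega_0}$; the identity-theorem argument above is what makes this analytic continuation go through, fed by the uniform bound furnished by (\ref{Eq3.4}).
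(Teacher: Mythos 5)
Your proof is correct and takes essentially the same route as the paper's: the pointwise bound $|\cos(\lambda t)|\le\cosh(\omega_0 t)$ combined with (\ref{Eq3.4}) for boundedness, Morera--Fubini for holomorphy, analytic continuation from $(a,b)$ to the strip for multiplicativity, and reality of the kernel for part (3). In fact your write-up is more complete than the paper's, which compresses your identity-theorem argument into the phrase ``by analytic continuation'' and leaves implicit both the continuity of $x\mapsto\phi_\lambda(x)$ (needed for Definition~\ref{Defn3.1}) and the limiting argument at the endpoints $\pm i\omega_0$, which lie outside the open strip $\Sigma_{\omega_0}$.
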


\begin{proof} If $\lambda = u+iv \in \Sigma_{\omega_0}$ then $|\cos(\lambda t)| \le \cosh(vt) \le \cosh(\omega_0 t)$ which shows that $|\phi_\lambda(x)| \le M_0$.
From this inequality and Morera's theorem, it also follows that $h_x\in H^\infty(\Sigma_{\omega_0})$.

Now $\phi_0(0)=1$ since $\phi_0\in \hat {\bf X}$, so $\phi_\lambda (0)=1$ for all $\lambda\in {\bf C}$. By Definition \ref{Defn3.2}, $\phi_\lambda (x)$ is multiplicative for all $\lambda \in (a,b)$ and by analytic continuation for all $\lambda\in \Sigma_{\omega_0}$. This completes the proof of (i) and (ii).

(iii) It is clear from the definition of $\phi_\lambda$ that if $\lambda$ real or purely imaginary then 
$ \phi_\lambda(x) \in {\bf R}$.  Hence $\phi_\lambda$ is a character of ${\bf X}$ for all $\lambda\in {\bf R}\cup [-i\omega_0 ,i\omega_0]$.

(iv) Finally, we have $\vert \hat f(\lambda )\vert \leq \int_{0}^\infty M_0\vert f(x)\vert\, m(dx)$ for all $f\in L^1(m)$, so 
(iv) follows from (i) by convexity. 
\end{proof}

\section{An operational calculus from the Mellin transform }\label{Sec:Mellin}

A canonical example of a hypergroup structure on $(0,\infty )$ is given by the convolution $\varepsilon_x\ast\varepsilon_y=\varepsilon_{xy}$. In this case the invariant measure is $dx/x$ and bounded characters are $\phi_\tau (x)=x^{i\tau}$ ($\tau \in {\bf R}$). The Fourier transform in this case is thus
  \[ {\hat f}(\phi_\tau) = \int_0^\infty f(x) x^{i\tau} \, \frac{dx}{x} \]
which is traditionally written as $f^*(i\tau)$,  the Mellin transform of $f$ evaluated at $i\tau$. 

If $A$ is a sectorial operator on a Hilbert space such that for some
$M, \omega_1\geq 0$,
$\Vert A^{i \tau} \Vert_{{\cal L}(H)}\leq Me^{\omega_1 \tau}$ for all $\tau \in {\bf R}$, then $A$ has a
bounded $H^\infty (\Sigma_{\omega_0})$ functional calculus on $H$ for
all $\omega_0>\omega_1$. 
Example 5.2 in \cite{Cowling:1996}  shows that this results does not extend from $H=L^2({\bf R})$ to $L^p({\bf R})$ for $p\neq 2$. To address this issue, we provide an operational calculus results based on the Mellin transform. (The use of the Mellin transform is of course not novel: see, for example,  \cite{Blower:1996}, \cite{Cowling:1996}, \cite{McIntosh:1986} or \cite{Uiterdijk:2000}).

We recall a Mellin transform theorem from \cite[p.~273]{Sneddon:1972}. Let $f^*$ be holomorphic on
$i\Sigma_\alpha$ and suppose that $e^{\vert \tau\vert\mu} f^*(\sigma +i\tau )\rightarrow 0$ uniformly on $i\Sigma_{\alpha -\varepsilon}$ as $\tau\rightarrow\pm  \infty$ for some $\alpha >\varepsilon >0$ and $\mu \leq \pi$. Then $f^*(s)$ is the Mellin transform of
  \begin{equation*}\label{eq2.1}
  f(z)=\frac{1}{2\pi i} \int_{\sigma -i\infty}^{\sigma +i\infty} z^{-s} f^*(s)\, ds \qquad (0 < z < \infty).
  \end{equation*}

\begin{proposition}\label{Prop2.1}
 Suppose that $1<p<\infty$ and that $E$ is a closed linear subspace of $L^p(\Omega,\mu)$ for some measure space $(\Omega,\mu)$. Suppose also that
\begin{enumerate}
 \item $A$ is a one-to-one operator in $E$ such that $(A^{i\tau})_{\tau\in {\bf R}}$ is a $C_0$ group of operators on $E$ and $\Vert A^{i\tau}\Vert_{{\cal L}(E)}\leq C$ for all $\tau\in {\bf R}$;
 \item $f^* \in H^\infty(iV_{\theta ,\omega })$ for some $\theta ,\omega >0$, that $f^*$ is continuous on the closure of $iV_{\theta ,\omega}$ and $f^*(s )\rightarrow 0$ as $\vert s\vert\rightarrow \infty$, uniformly with respect to $\arg s$ for $s\in iV_{\theta ,\omega}$.
\end{enumerate}
Then
  \begin{equation}\label{Eq2.2}
  f(A)=\frac{1}{2\pi}\int_{-\infty}^\infty A^{-i\tau} f^*(i\tau )\,d\tau
  \end{equation}
defines a bounded linear operator on $E$.
\end{proposition}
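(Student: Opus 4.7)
The integral in (\ref{Eq2.2}) is most naturally interpreted as a Fourier multiplier operator with respect to the uniformly bounded $C_0$ group $(A^{-i\tau})_{\tau\in{\bf R}}$, with scalar symbol $k(\tau)=f^*(i\tau)$. The plan is to verify that $k$ satisfies a H\"ormander--Mihlin multiplier condition by exploiting the Venturi geometry of $iV_{\theta,\omega}$, and then to use a transference argument to obtain a bounded operator on $E$.

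For the Mihlin condition, the key geometric observation is that $iV_{\theta,\omega}$ contains sectors $\pm iS_\theta^0$ which widen linearly about the imaginary axis, so that the distance from $i\tau$ to the boundary satisfies $\mathrm{dist}(i\tau,\partial iV_{\theta,\omega})\geq c(1+|\tau|)$ uniformly in $\tau\in{\bf R}$. Applying the Cauchy integral formula on disks of this radius to the $H^\infty$ function $f^*$ yields the estimates
  \[ |\tau^n (f^*)^{(n)}(i\tau)|\leq C_n\|f^*\|_{H^\infty(iV_{\theta,\omega})}, \qquad n=0,1,2,\dots, \]
uniformly on ${\bf R}$. Hence $k(\tau)=f^*(i\tau)$ is a H\"ormander--Mihlin multiplier on ${\bf R}$, and in particular a bounded Fourier multiplier on $L^p({\bf R})$ for every $1<p<\infty$.

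Since $E$ is a closed linear subspace of $L^p(\Omega,\mu)$ with $1<p<\infty$, it inherits the UMD property. Combined with the uniformly bounded $C_0$ group $(A^{-i\tau})$, this permits a Coifman--Weiss--type transference principle to move bounded Fourier multipliers on $L^p({\bf R})$ (acting by translation) to bounded operators on $E$ (acting via $\log A$). Applied to $k$, this delivers the boundedness of the putative operator on $E$. To make the integral precise, regularize $f^*$ by $f^*_\epsilon(s)=(1+\epsilon s^2)^{-1}f^*(s)$: the integrals $\int A^{-i\tau} f^*_\epsilon(i\tau)\,d\tau$ converge absolutely as Bochner integrals, the Mihlin bound is uniform in $\epsilon>0$, and the hypothesis $f^*(s)\to 0$ together with dominated convergence give $f_\epsilon(A)\to f(A)$ in the strong operator topology as $\epsilon\to 0^+$.

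The main obstacle is matching the Cauchy-derived Mihlin condition on $f^*(i\cdot)$ to an appropriate multiplier theorem for the bounded $C_0$ group $(A^{-i\tau})$ on the UMD space $E$; either the Coifman--Weiss transference principle or a direct spectral multiplier theorem for UMD spaces will suffice, but both route crucially exploit the UMD structure inherited from the embedding $E\subseteq L^p(\Omega,\mu)$, which is precisely what is unavailable in the Cowling counterexample cited above.
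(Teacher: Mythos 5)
Your proposal follows essentially the same route as the paper's proof: Cauchy estimates exploiting the linearly widening Venturi geometry give the Ste{\v c}kin/Mihlin condition $\vert f^*(i\tau)\vert + \vert\tau\vert\,\vert \frac{d}{d\tau}f^*(i\tau)\vert \leq C_{\theta,\omega}$, so $f^*(i\tau)$ is an $L^p({\bf R})$ Fourier multiplier, and the Berkson--Gillespie transference theorem (precisely the UMD/Coifman--Weiss-type transference you invoke, and the reason $E$ is assumed to be a closed subspace of $L^p$) transfers this to a bounded operator on $E$ via the bounded group $(A^{i\tau})$. One slip in your added regularization step, which the paper does not attempt: the factor $(1+\epsilon s^2)^{-1}$ has poles at $s=\pm i/\sqrt{\epsilon}$, which lie on the imaginary axis inside $iV_{\theta,\omega}$, so on the contour of integration $f^*_\epsilon(i\tau)=f^*(i\tau)/(1-\epsilon\tau^2)$ blows up at $\tau=\pm 1/\sqrt{\epsilon}$ and the claimed absolute convergence fails; replacing it by $(1-\epsilon s^2)^{-1}$ (poles at $\pm 1/\sqrt{\epsilon}$ on the real axis, outside the vertical Venturi region for small $\epsilon$) gives $f^*_\epsilon(i\tau)=f^*(i\tau)/(1+\epsilon\tau^2)$ and repairs the argument.
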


\begin{proof} By Cauchy's estimates, there exists $C_{\theta ,\omega}>0$ such that
   \begin{equation}\label{Eq2.3}\vert f^*(i\tau )\vert +\vert \tau \vert \Bigl\vert \frac{df^*}{d\tau}(i\tau )\Bigr\vert \leq C_{\theta ,\omega}
     \qquad (\tau \in {\bf R}),
   \end{equation} 
hence $f^*(i\tau)$ defines a Fourier multiplier on $L^p({\bf R})$ as in Ste{\v c}kin's Theorem. By the Berkson--Gillespie transference theorem \cite{Berkson:1987}, the integral (\ref{Eq2.2}) defines a bounded linear operator on $E$.
\end{proof}

Next we extend the result to groups of exponential growth. We note here the relatively standard (and easily proven) fact about analytic continuation of a function on $(0,\infty)$ defined using the Mellin transform.

\begin{lemma}\label{Lem2.2}
Suppose that $0 < \phi < \omega$. If 
$f^*(s) \cos(\omega s)$ belongs to $H^\infty(iV_{\theta,\alpha})$ for some $0 < \theta < \frac{\pi}{2}$ and $\alpha > 0$,
then
   \begin{equation}\label{Eq2.4}
    f(z) = \frac{1}{2\pi} \int_{-\infty}^\infty z^{-i \tau} f^*(i\tau) \, d\tau 
   \end{equation}
belongs to $H^\infty(S_\phi^0)$.
\end{lemma}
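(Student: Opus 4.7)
The plan is to exploit the fact that multiplying by $\cos(\omega s)$ turns a bounded analytic function on $iV_{\theta,\alpha}$ into one that grows like $\cosh(\omega \tau)$ on the imaginary axis; hence $f^\ast(i\tau)$ itself must decay like $1/\cosh(\omega \tau)$, and this exponential decay is exactly what is needed to control $z^{-i\tau}$ when $z$ ranges over a sector of opening angle strictly less than $\omega$.

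First I would extract the pointwise bound. Set $g(s) = f^\ast(s)\cos(\omega s)$, so $g \in H^\infty(iV_{\theta,\alpha})$. Since $\cos(i\omega\tau) = \cosh(\omega\tau)$, the hypothesis gives
\begin{equation*}
|f^\ast(i\tau)| \le \frac{\|g\|_\infty}{\cosh(\omega\tau)} \le 2\|g\|_\infty e^{-\omega|\tau|}, \qquad \tau \in {\bf R}.
\end{equation*}
Next, write $z = re^{i\psi}$ with $r>0$ and $|\psi|<\phi$, and take the principal branch of the logarithm so that $z^{-i\tau} = e^{-i\tau(\log r + i\psi)} = e^{\tau\psi}e^{-i\tau \log r}$. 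Combining these two estimates, for $|\psi|\le \phi$,
\begin{equation*}
|z^{-i\tau}f^\ast(i\tau)| \le 2\|g\|_\infty e^{\tau\psi - \omega|\tau|} \le 2\|g\|_\infty e^{-(\omega-\phi)|\tau|}.
\end{equation*}
Because $\phi < \omega$, this dominant function is integrable in $\tau$, so the integral defining $f(z)$ in (\ref{Eq2.4}) converges absolutely, and one immediately obtains the uniform bound $\|f\|_{H^\infty(S_\phi^0)} \le \frac{2\|g\|_\infty}{\pi(\omega-\phi)}$.

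Finally I would verify holomorphy of $f$ on $S_\phi^0$. For each fixed $\tau \in {\bf R}$, $z\mapsto z^{-i\tau}$ is holomorphic on $S_\phi^0$. Given any closed disc $\overline{D}\subset S_\phi^0$, the estimate above holds with $\phi$ replaced by some $\phi' < \phi$ that contains $\overline{D}$ in $S_{\phi'}^0$, so the integrand is dominated uniformly on $\overline{D}$ by an integrable function of $\tau$. Morera's theorem combined with Fubini (integrating $\oint_\gamma z^{-i\tau}\,dz = 0$ against $f^\ast(i\tau)\,d\tau$) then shows $f$ is holomorphic on $S_\phi^0$, completing the proof.

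There is no serious obstacle here; this is essentially a careful bookkeeping argument. The only point requiring a little care is ensuring the sign conventions in $z^{-i\tau} = e^{\tau\psi}e^{-i\tau\log r}$ are right, since this is what pairs the growth direction of $z^{-i\tau}$ against the decay of $f^\ast(i\tau)$ forced by the $\cos(\omega s)$ factor.
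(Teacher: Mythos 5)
Your proof is correct and follows essentially the same route as the paper's: extract the decay $f^*(i\tau) = O(e^{-\omega|\tau|})$ from boundedness of $f^*(s)\cos(\omega s)$ (since $\cos(i\omega\tau) = \cosh(\omega\tau)$), pair it against $|z^{-i\tau}| \le e^{\phi|\tau|}$ on the sector, and conclude absolute convergence together with the standard Morera/Fubini argument for analyticity. The only difference is that you make explicit the constants and the holomorphy details that the paper leaves as ``standard.''
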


\begin{proof} Note that for $\tau \in {\bf R}$,  $|f^*(i\tau) \cos(i \omega \tau)| = |f^*(i\tau)| \cosh(\omega \tau)$ and hence $f^*(i\tau) = O(e^{-\omega |\tau|})$ as $\tau \to \pm \infty$. On the other hand if $z \in S_\phi^0$, then $|z^{-i\tau}| \le e^{\phi |\tau|}$. It follows therefore that the integral (\ref{Eq2.4}) converges absolutely. The analyticity of $f$ is standard.
\end{proof}

\begin{proposition}\label{Prop2.2}
Suppose that $\omega,\alpha > 0$, that $0 < \theta < \frac{\pi}{2}$, that $0 < \omega_0 < \omega$, and that
\begin{enumerate}
 \item $f^*(s) \cos(\omega s)$ belongs to $H^\infty(iV_{\theta,\alpha})$; and
 \item $(A^{i\tau})_{\tau\in {\bf R}}$ is a $C_0$ group on a Banach space $E$ such that $\Vert A^{i\tau}\Vert_{{\cal L}(E)} \leq Ce^{\omega_0\vert \tau\vert}$ for all $\tau\in {\bf R}$.
\end{enumerate}
Then $f(A)$, defined by (\ref{Eq2.2}), is a bounded linear operator on $E$.
\end{proposition}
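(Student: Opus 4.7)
The plan is that the hypothesis on $f^*$ already encodes enough exponential decay on the imaginary axis to absorb the exponential growth of the group $(A^{i\tau})$, so that the integral (\ref{Eq2.2}) converges absolutely in operator norm. No transference theorem is needed here, in contrast to the proof of Proposition~\ref{Prop2.1}.

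First I would extract the decay of $f^*$ on the imaginary axis. For $s=i\tau$ with $\tau\in{\bf R}$ one has $|\cos(\omega s)|=|\cos(i\omega\tau)|=\cosh(\omega\tau)$. Since $f^*(s)\cos(\omega s)\in H^\infty(iV_{\theta,\alpha})$ and the imaginary axis sits inside the Venturi region $iV_{\theta,\alpha}$, there exists $M>0$ with
  \[
    |f^*(i\tau)|\cosh(\omega\tau)\le M\qquad(\tau\in{\bf R}),
  \]
so $|f^*(i\tau)|\le 2Me^{-\omega|\tau|}$. This is the same mechanism that drives Lemma~\ref{Lem2.2}.

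Next I would combine this with the growth bound $\|A^{-i\tau}\|_{{\cal L}(E)}\le Ce^{\omega_0|\tau|}$ coming from hypothesis~(ii). Then
  \[
    \int_{-\infty}^\infty \|A^{-i\tau}f^*(i\tau)\|_{{\cal L}(E)}\,d\tau
    \le 2CM\int_{-\infty}^\infty e^{-(\omega-\omega_0)|\tau|}\,d\tau
    =\frac{4CM}{\omega-\omega_0},
  \]
where we used $\omega_0<\omega$. The strong continuity of the $C_0$ group $(A^{i\tau})_{\tau\in{\bf R}}$ guarantees that $\tau\mapsto A^{-i\tau}x$ is continuous, hence strongly measurable, for each $x\in E$. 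Consequently, for every $x\in E$ the vector-valued integral
  \[
    f(A)x=\frac{1}{2\pi}\int_{-\infty}^\infty A^{-i\tau}x\,f^*(i\tau)\,d\tau
  \]
exists as a Bochner integral in $E$, and the pointwise estimate above yields $\|f(A)x\|_E\le\tfrac{2CM}{\pi(\omega-\omega_0)}\|x\|_E$, so $f(A)\in{\cal L}(E)$.

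I do not expect any serious obstacle: the exponential decay of $f^*$ on the imaginary axis, forced by the $\cos(\omega s)$ factor, strictly dominates the permissible growth rate $\omega_0$ of the group, and the integral then converges in the strong operator topology by a routine Bochner argument. The only point requiring a little care is noting that the boundedness hypothesis on the whole Venturi region $iV_{\theta,\alpha}$ restricts in particular to the imaginary axis, which is all one needs here; the sector pieces of $iV_{\theta,\alpha}$ play no role in this proof and are used only downstream, for example in the analytic extension properties of $f$ itself.
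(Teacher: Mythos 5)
Your proposal is correct and follows essentially the same route as the paper: the paper's (terser) proof likewise deduces absolute convergence of the integral (\ref{Eq2.2}) from the exponential decay $f^*(i\tau)=O(e^{-\omega|\tau|})$ forced by hypothesis (i) --- the mechanism of Lemma~\ref{Lem2.2} --- against the growth bound $Ce^{\omega_0|\tau|}$ with $\omega_0<\omega$, and then invokes strong continuity of the integrand to conclude that $f(A)$ is a bounded operator. Your write-up merely makes explicit the constants and the Bochner-integral justification that the paper leaves implicit.
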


\begin{proof}
The absolute convergence of the integral (\ref{Eq2.2}) follows easily from (ii) and the bounds in the proof of Proposition~\ref{Prop2.1}. Since the integrand is strongly continuous, the integral for $f(A)$ converges.
\end{proof}

\begin{remark}\label{Rem2.3}
An example
in \cite{Cowling:1996} and \cite{Boyadzhiev:1994} shows that for each $q\neq 2$ and $0<\theta <\pi$, there exists
$f\in H^\infty (\Sigma_\theta )$ that is not a bounded Fourier multiplier on
$L^q({\bf R})$. 
\end{remark}

\begin{example}\label{Ex2.4}
We consider a specific example at the margins of the scope of Proposition~\ref{Prop2.1}. Let $J_0$ be Bessel's function of the first kind of order zero, and for $x > 0$ let $g(x)=\sqrt{x}J_0(x)$. By \cite[p.522]{Sneddon:1972} $g$ has Mellin transform
  \[ g^*(s)=\frac{2^{s-1/2}}{\pi} \sin \pi\Bigl( \frac{s}{2}+\frac{1}{4}\Bigr) \, \Gamma \Bigl( \frac{s}{2}+\frac{1}{4}\Bigr)^2 \]
which is holomorphic for $s\in i\Sigma_{\alpha }$ for $0<\alpha <1/2$ and of polynomial growth as $is\to \infty$.
For $N > 0$ consider the functions (as in \cite{Erdelyi:1954v1})
  \[ h_N(x)=\frac{2N}{\pi} \frac{x^N}{1+x^{2N}},
  \quad
  h_N^*(s)=\sec  \Bigl( \frac{\pi s}{2N}\Bigr).\]
Then $h_N^*(is)\in H^\infty (V_{\theta,\beta})$ for $0<\beta <N$ and $0<\theta <\pi /2$, and $h_N^*(s)\rightarrow 1$ as $N\rightarrow\infty$, uniformly on compact subsets of ${\bf C}$.
The Mellin convolution $f_N=g\ast h_N$ from \cite[p.~276]{Sneddon:1972} has Mellin transform $f_N^*(s)=g^*(s)h^*_N(s)$ which is bounded and holomorphic for $s\in i\Sigma_\alpha$, for
$1/2<N<\infty$, although $f_N^*$ becomes unbounded whenever we extend $i\Sigma_\alpha$ to $iV_{\theta,\alpha}$ for $\theta >0$; so Proposition \ref{Prop2.1} (ii) does not apply directly. Nevertheless, by invoking standard asymptotic estimates  on the $\Gamma$ function from \cite[p.~279]{Whittaker:1927}, one can check  that (\ref{Eq2.3}) holds for $f_N^*$. We deduce that the conclusion of Proposition~\ref{Prop2.1} holds for $f_N$.
The $f_N$ can be computed in terms of standard special functions. In particular, using the table of Stieltjes transforms in \cite[14.3(6)]{Erdelyi:1954v2}, we can compute, in terms of the hypergeometric function $\Fhyp$, 
  \begin{align*}
  f_1(x)&=\int_0^\infty \frac{(x/y)}{1+(x/y)^2 } \sqrt {y} J_0(y)\frac{dy}{y } \\
    &={\frac{1}{2i}}\int_0^\infty\Bigl( {\frac{1}{y-ix}}- {\frac{1}{y+ix}}\Bigr) \sqrt{y}J_0(y)\, dy\\
    &= {\frac{\pi \sqrt{x} J_0(ix)}{\sqrt{2}}}+ {\frac{\Gamma (-\frac{1}{4})}{2^{3/2}\Gamma (\frac{5}{4})}} \, 
                \Fhyp   \Bigl(1;\tfrac{5}{4},\tfrac{5}{4};\tfrac{x^2}{4}\Bigr)x.
  \end{align*}
\end{example}

\section{An operational calculus from hypergroup convolution}\label{Sec3}

In this section we shall suppose that the operator $A$ generates a strongly continuous cosine family $(\cos (tA))_{t\in {\bf R}}$ on $E$, and that $({\bf X},\ast)$ is a hypergroup which admits a Laplace representation for its characters $\phi_\lambda$ as given in Definition~\ref{Defn3.2}.

In this setting we 
define the family of bounded linear operators \{$\phi_A(x)\}_{x \ge 0}$ on $E$ by the strong operator convergent integrals
  \begin{equation}\label{Eq3.5}
  \phi_A(x)=\int_{ -x}^{x} \cos (At)\, \tau_x(dt)\qquad (x\geq 0).
  \end{equation}

Note that one can easily verify that in simple situations (such as if $A$ is a normal matrix), $\phi_A(x) = h_x(A)$, where $h_x(\lambda) = \phi_\lambda(x)$ and  the right-hand side is interpreted via the usual Riesz functional calculus.
We now seek to define ${\hat f}(A)$ for suitable functions $f$ via the hypergroup Fourier transform by writing it as an integral of these operators. 

\begin{proposition}\label{Prop3.5}
Let $({\bf X}, *)$ have a Laplace representation satisfying (\ref{Eq3.4})
and suppose that $A$ generates a strongly continuous cosine family on $E$ satisfying
   \begin{equation}\label{Eq3.6} 
      \Vert \cos (tA)\Vert_{{\cal L}(E)}\leq \kappa\cosh (t\omega_0)\qquad (t\geq 0). 
    \end{equation}
Then
\begin{enumerate}
 \item $(\phi_A(x))_{x>0}$ is a uniformly bounded family of operators;
 \item for all $f \in L^1(m)$, the following integral converges in the strong operator sense
    \begin{equation}\label{Eq3.7}
      T_A  (f)=\int_0^\infty f(x)\phi_A(x) \,m(dx)
    \end{equation}
  and defines a bounded linear operator on $E$;
  \item for $f,g\in L^1(m)$, $T_A(f\ast g)=T_A (f)T_A (g)$, and so the map $T_A: L^1(m) \to {\cal L}(E)$ is an algebra homomorphism.
\end{enumerate}
\end{proposition}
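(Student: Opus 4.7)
The plan is to verify (i) and (ii) by direct estimation, and to reduce (iii) to the operator-valued multiplicativity identity
\[ \phi_A(x)\phi_A(y) = \int \phi_A(z)\,d(\varepsilon_x \ast \varepsilon_y)(z) \qquad (x,y \ge 0), \]
which I would then prove using the d'Alembert cosine functional equation combined with uniqueness of the Fourier cosine transform.

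For (i), the definition (\ref{Eq3.5}), the hypothesis (\ref{Eq3.6}), and the assumption (\ref{Eq3.4}) combine to give $\Vert\phi_A(x)\Vert_{{\cal L}(E)} \le \kappa\int_{-x}^x \cosh(\omega_0 t)\,\tau_x(dt) \le \kappa M_0$. For (ii), strong measurability of $x \mapsto \phi_A(x)v$ (for each $v \in E$) follows from the strong continuity of the cosine family and the regularity of the family $\{\tau_x\}$ in $x$; combined with the uniform bound from (i), this makes $x \mapsto f(x)\phi_A(x)v$ Bochner integrable with $\Vert T_A(f)v\Vert \le \kappa M_0 \Vert f\Vert_{L^1(m)}\Vert v\Vert$.

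For (iii), a Fubini interchange (valid by part (i) and finiteness of the scalar measures) yields
\[ T_A(f)T_A(g) = \int_0^\infty\!\!\int_0^\infty f(x)g(y)\,\phi_A(x)\phi_A(y)\, m(dx)\,m(dy), \]
while the standard hypergroup convolution identity expresses $T_A(f \ast g)$ as
\[ \int_0^\infty\!\!\int_0^\infty f(x)g(y)\Bigl[\int_0^\infty \phi_A(z)\,d(\varepsilon_x \ast \varepsilon_y)(z)\Bigr] m(dx)\,m(dy), \]
so (iii) reduces to the displayed identity above. Without loss of generality each $\tau_x$ may be taken symmetric about $0$: its odd part is invisible to both the scalar representation (\ref{Eq3.3}) and the operator integral (\ref{Eq3.5}), since $\cos(tA) = \cos(-tA)$ for every cosine family. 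Applying the d'Alembert identity $\cos(sA)\cos(tA) = \frac{1}{2}[\cos((s+t)A)+\cos((s-t)A)]$ inside (\ref{Eq3.5}) rewrites the left side as $\int_{\bf R} \cos(uA)\,\mu_{x,y}(du)$, where $\mu_{x,y}$ is the symmetric measure obtained by averaging the pushforwards of $\tau_x\otimes\tau_y$ under $(s,t)\mapsto s\pm t$. Similarly, the right side equals $\int_{\bf R} \cos(uA)\,\nu_{x,y}(du)$ with $\nu_{x,y}(B) = \int \tau_z(B)\,d(\varepsilon_x \ast \varepsilon_y)(z)$. By Lemma~\ref{3.4(i)}, both measures have the same Fourier cosine transform $\phi_\lambda(x)\phi_\lambda(y)$ at every real $\lambda$; being compactly supported, positive, and even, they coincide by Fourier cosine uniqueness, and integrating the strongly continuous function $u \mapsto \cos(uA)v$ against this common measure produces the required identity.

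The main obstacle is precisely this passage from scalar to operator multiplicativity: symmetrizing $\tau_x$ correctly, identifying the pair $\mu_{x,y}$ and $\nu_{x,y}$, and then rigorously invoking Fourier cosine uniqueness in a form that transfers to the operator-valued integrals, which requires attention to strong measurability and to Fubini interchanges for compactly supported operator-valued integrands.
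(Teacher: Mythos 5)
Your proposal is correct and follows essentially the same route as the paper: parts (i) and (ii) via the direct bound $\Vert\phi_A(x)\Vert_{{\cal L}(E)}\leq\kappa M_0$ and convexity, and part (iii) by combining scalar multiplicativity of the $\phi_\lambda$ with the Laplace representation and the cosine addition rule to transfer the identity from $\lambda$ to $A$. The paper carries out this last step by deriving its identity (\ref{Eq3.8}) and then ``formally replacing $\lambda$ by $A$''; your explicit symmetrization of the $\tau_x$ and the identification $\mu_{x,y}=\nu_{x,y}$ via uniqueness of the cosine transform of compactly supported even measures is precisely the rigorous content implicit in that formal substitution.
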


\begin{proof}
(i)  We observe that by convexity $\phi_A(x)$ is
a bounded linear operator on $E$, and $\Vert \phi_A(x)\Vert_{{\cal L}(E)}\leq
\kappa M_0$.

(ii) Conclusion (ii) follows from (i) by convexity.

(iii) From the identity $\phi_\lambda (x\ast y)=\phi_\lambda (x)\phi_\lambda (y)$ and the Laplace representation (\ref{Eq3.3}), we have
   \begin{multline}\label{Eq3.8}
  \int \cos\lambda u \int \tau_z(du) (\varepsilon_x\ast\varepsilon_y)(dz) \\
   = {\frac{1}{2}}\iint \cos\lambda (t+s) \tau_x (dt)\tau_y(ds)  
       +{\frac{1}{2}}\iint \cos\lambda (t-s) \tau_x (dt)\tau_y(ds).
       \end{multline} 
So by the addition rule $\cos ((t-s)A)+\cos ((t+s)A)=2\cos (tA)\cos (sA)$ for the cosine family, the identity
$\phi_A(x\ast y)=\phi_A(x)\phi_A(y)$ follows unambiguously when one formally replaces $\lambda$ by $A$ in (\ref{Eq3.8}). We have
  \[ \int_0^\infty \int_0^\infty\phi_\lambda (x\ast y)f(x)g(y)\,m(dx)\,m(dy)
    =\int_0^\infty \phi_\lambda (z)(f\ast g)(z)\,m(dz)
   \]
by a standard identity \cite[6.1F]{Jewett:1975}, so 
\[ \int_0^\infty \int_0^\infty\phi_A (x\ast y)f(x)g(y) \, \,m(dx)\,m(dy)
    =\int_0^\infty \phi_A (z)(f\ast g)(z)\, m(dz)
   \]
 so we can express the left-hand side as a product of operators
\[\int_0^\infty\phi_A (x)f(x) \,m(dx) \int_0^\infty \phi_A(y)g(y)\ m(dy)
    =\int_0^\infty \phi_A (z)(f\ast g)(z)\, m(dz)
   \]
so that $f\mapsto T_A(f)$ is multiplicative.
\end{proof}

\begin{remark}\label{Rem3.6}
We interpret $T_A(f)$ in the above theorem as ${\hat f}(A)$.
The map $T_A: L^1({\bf X},m) \to {\cal L}(E)$ is a Banach algebra homomorphism  which generates a functional calculus map $\Phi_A (\psi) = \psi(A) = T_A \circ \mathcal{F}_{\bf X}^{-1}(\psi)$ defined for $\psi$ in the range $\mathcal{A}$ of the Fourier transform $\mathcal{F}_{\bf X}$.

\[
\begin{tikzcd}
(L^1({\bf X},m),\ast)  \arrow[leftrightarrow]{r}{\mathcal{F}_{\bf X}} \dar{T_A}
                   & {\mathcal A} \subseteq H^\infty(\Sigma_{\omega_0}) \dlar[dashed]{\Phi_A} \\
{\cal L}(E) &
\end{tikzcd}
\]
It is natural to ask whether the map $\Phi_A$ extends to a bounded algebra homomorphism $H^\infty(\Sigma_{\omega_0}) \to {\cal L}(E)$.
\end{remark}

According to \cite[Section 2.5.6]{Bloom:1994} and \cite{Voit:1988}, a noncompact commutative hypergroup has the Kunze--Stein property of order $p>1$ if $\Lambda_f$ gives a bounded linear operator on $L^2(m)$ for all $f\in L^p(m)$. In the following result, we refine this result by extending $\hat f$ to give an analytic function on a strip containing ${\bf S}$ and obtain an operational calculus.  
  To accommodate $p > 1$ we rescale the speed of $\cos(tA)$ to $\cos(\alpha tA)$ with $0 < \alpha <1$. 
Since our hypergroups are noncompact and commutative, \cite[Theorem~7.2B]{Jewett:1975} and \cite[Theorem ~2.5.6]{Bloom:1994} say that $\phi_0$ is not in $L^\nu (m)$ for $1\leq \nu \leq 2$. The following result therefore includes the optimal range of exponents.

\begin{theorem}\label{Thm3.7} Let $({\bf X}, *)$ have a Laplace representation satisfying (\ref{Eq3.4}) and suppose that $A$ generates a strongly continuous cosine family on $E$ satisfying (\ref{Eq3.6}). Suppose further that $\phi_0 \in L^\nu(m)$ for some $2 < \nu < \infty$. 
Let $0 < \alpha < 1$ and let $p = \nu/(\nu+\alpha - 1)$. Then
\begin{enumerate}  
  \item the Fourier transform $f \mapsto {\hat f}$ is bounded $L^p(m) \to H^\infty(\Sigma_{\alpha \omega_0})$;
  \item the convolution operator $\Lambda_f:g\mapsto f\ast g$ gives a bounded linear operator on $L^2(m)$ for all $f\in L^p(m)$;
  \item the map $f \mapsto T_{\alpha A}(f)$ defined via (\ref{Eq3.7}) is bounded $L^p(m) \to \mathcal{L}(E)$.
\end{enumerate}
\end{theorem}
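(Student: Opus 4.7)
The key to the whole theorem is a single pointwise interpolation bound for characters on the strip:
\[ |\phi_\lambda(x)| \le 2^\alpha M_0^\alpha\, \phi_0(x)^{1-\alpha} \qquad (\lambda \in \Sigma_{\alpha\omega_0}, \ x \ge 0). \]
To obtain this, I would combine (\ref{Eq3.3}) and $|\cos(\lambda t)| \le \cosh(\alpha\omega_0 |t|)$ to reduce to estimating $\phi_{i\alpha\omega_0}(x) = \int_{-x}^x \cosh(\alpha\omega_0 t)\,\tau_x(dt)$, and then use the elementary inequality $\cosh(\alpha s) \le 2^\alpha(\cosh s)^\alpha$ together with H\"older's inequality on the finite measure $\tau_x$ with conjugate exponents $1/\alpha$ and $1/(1-\alpha)$ to deduce
\[ \phi_{i\alpha\omega_0}(x) \le 2^\alpha \left(\int_{-x}^x \cosh(\omega_0 t)\,\tau_x(dt)\right)^{\alpha} \phi_0(x)^{1-\alpha} \le 2^\alpha M_0^\alpha\, \phi_0(x)^{1-\alpha}, \]
using (\ref{Eq3.4}) at the last step. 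The identical computation at the operator level, using (\ref{Eq3.6}) for the rescaled cosine family $(\cos(\alpha tA))_{t\in {\bf R}}$, yields $\|\phi_{\alpha A}(x)\|_{{\cal L}(E)} \le 2^\alpha \kappa M_0^\alpha\, \phi_0(x)^{1-\alpha}$.

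With this pointwise estimate in hand, parts (i) and (iii) both reduce to one application of H\"older's inequality. The conjugate exponent of $p = \nu/(\nu+\alpha-1)$ is $p' = \nu/(1-\alpha)$, chosen precisely so that $(1-\alpha)p' = \nu$; hence for $f \in L^p(m)$,
\[ \int_0^\infty |f(x)|\,\phi_0(x)^{1-\alpha}\,m(dx) \le \|f\|_{L^p(m)}\,\|\phi_0\|_{L^\nu(m)}^{1-\alpha}. \]
Combined with the pointwise bound, this delivers $|\hat f(\lambda)| \le C\|f\|_{L^p(m)}$ uniformly on $\Sigma_{\alpha\omega_0}$ and, via (\ref{Eq3.7}) with $A$ replaced by $\alpha A$, $\|T_{\alpha A}(f)\|_{{\cal L}(E)} \le \kappa C\|f\|_{L^p(m)}$. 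Analyticity of $\hat f$ on $\Sigma_{\alpha\omega_0}$ is inherited from Lemma~\ref{3.4(i)}(ii) by dominated convergence under the same pointwise bound.

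For (ii), I would invoke the hypergroup Plancherel theorem: the transform $f \mapsto \hat f$ is a unitary isomorphism $L^2(m) \to L^2(\pi_0)$, and $\widehat{f\ast g} = \hat f\,\hat g$ on ${\bf S}$ extends from $L^1 \times L^1$ to $(L^1(m)\cap L^p(m)) \times L^2(m)$ by density. Since ${\bf S}$ consists of characters, part (i) gives $\|\hat f\|_{L^\infty({\bf S})} \le C\|f\|_{L^p(m)}$, and so
\[ \|f\ast g\|_{L^2(m)} = \|\hat f\,\hat g\|_{L^2(\pi_0)} \le \|\hat f\|_{L^\infty({\bf S})}\,\|g\|_{L^2(m)} \le C\|f\|_{L^p(m)}\|g\|_{L^2(m)}; \]
the general case $f \in L^p(m)$ then follows by approximation, since $L^1(m)\cap L^p(m)$ is dense in $L^p(m)$.

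The only genuinely creative step is recognising the H\"older splitting that converts the exponential growth of $\cosh(\alpha\omega_0 t)$ against $\tau_x$ into a power $\phi_0(x)^{1-\alpha}$ of the reference character; once this interpolation bound is in place, the unified treatment of (i) and (iii) together with the Plancherel argument for (ii) is essentially mechanical, and the choice of $p$ in the statement is dictated entirely by the H\"older condition $(1-\alpha)p' = \nu$.
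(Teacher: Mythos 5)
Your proposal is correct and follows essentially the same route as the paper's proof: the identical interpolation bound $|\phi_\lambda(x)|\leq C M_0^\alpha\,\phi_0(x)^{1-\alpha}$ on $\Sigma_{\alpha\omega_0}$ obtained by H\"older's inequality on $\tau_x$ with exponents $1/\alpha$ and $1/(1-\alpha)$ (the paper gets constant $C=1$ from $\cosh(\alpha s)\leq\cosh^\alpha(s)$, via convexity of $\log\cosh$, rather than your cruder $2^\alpha$), followed by the same H\"older pairing with conjugate exponent $q=\nu/(1-\alpha)$ for parts (i) and (iii). For part (ii), your direct Plancherel computation $\Vert f\ast g\Vert_{L^2(m)}=\Vert \hat f\,\hat g\Vert_{L^2(\pi_0)}\leq\sup_{\bf S}|\hat f|\,\Vert g\Vert_{L^2(m)}$ plus density is precisely the content of the paper's appeal to \cite[Theorem~2.2.4]{Bloom:1994} applied through Proposition~\ref{Prop3.5}, so this too is the same argument in slightly more self-contained form.
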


\begin{proof} (i) The idea is that integrability of a suitable power of the positive character in $\phi_0\in {\bf S}$ enables us to extend the Fourier transform, while the Laplace representation enables us to continue the characters to analytic functions on a strip containing ${\bf S}$.
By Jensen's inequality, $\cosh (\alpha t\omega_0)\leq \cosh^\alpha (t\omega_0)$, so by H\"older's inequality we have, for $\lambda\in \Sigma_{\alpha\omega_0}$, 
  \begin{align}\label{Eq3.9} 
  \bigl\vert \phi_{\lambda}(x)\bigr\vert
      &\leq \int_{-x}^x \cosh (\alpha t\omega_0) \tau_x(dt)  \notag \\
      &\leq \Bigl( \int_{-x}^x\cosh (t\omega_0)\tau_x(dt)\Bigr)^\alpha
           \Bigl( \int_{-x}^x \tau_x(dt)\Bigr)^{1-\alpha } \notag\\
      &\leq M_0^\alpha \phi_0(x)^{1-\alpha }.
 \end{align}
By H\"older's inequality with $1/p+1/q=1$ we have $q=\nu /(1-\alpha )$. Thus
 \begin{multline}\label{Eq3.10}  
   \int_0^\infty  \vert f(x)\vert \phi_0(x)^{1-\alpha }\,m(dx)  \\
   \leq \Bigl( \int_0^\infty \vert f(x)\vert^p\, m(dx)\Bigr)^{1/p}\Bigl( \int_0^\infty \phi_0(x)^{q(1-\alpha )}\,m(dx)\Bigr)^{1/q};
  \end{multline}
where $(1-\alpha ) q=\nu >2$, and so the latest integral converges. Hence 
    \[\hat f(\lambda)=\int_0^\infty f(x) \phi_{\lambda}(x) \,m(dx)\]
converges absolutely and defines a bounded function on $\Sigma_{\alpha\omega_0}$ for all $f\in L^p(m)$. By Morera's theorem, $\hat f(\lambda )$  determines a function in $H^\infty (\Sigma_{\alpha \omega_0})$ for all $f\in L^p(m)$.

 (ii) We can in particular, apply Proposition \ref{Prop3.5} to $A:\hat g(\lambda)\mapsto \lambda\hat g(\lambda)$ and $g\in E=L^2(m)$, in which case $T_{A}(f)$ becomes the convolution operator $\Lambda_f$ by the Levitan--Plancherel theorem. By \cite[Theorem ~2.2.4]{Bloom:1994}, $\Lambda_f$ gives a bounded linear operator on $L^2(m)$, and 
   \[\Vert \Lambda_f\Vert_{{\cal L}(L^2)}=\sup \{ \vert \hat f(\phi )\vert :\phi \in {\bf S}\}\qquad (f\in L^1(m)).\]
 By (i), $\phi\mapsto \hat f(\phi )$ is bounded on ${\bf S}$ for all 
 $f\in L^p(m)\cap L^1(m)$, so we can extend to obtain $\Lambda_f\in {\cal L}(L^2)$ for all $f\in L^p(m)$.
 
(iii) By (\ref{Eq3.4}) and (\ref{Eq3.6}), we have
   \begin{align*}
   \bigl\Vert \phi_{\alpha A}(x)\bigr\Vert_{{\cal L}(E)}
     &\leq \int_{-x}^x \kappa \cosh (\alpha t\omega_0) \tau_x(dt)   \notag \\
     &\leq \kappa M_0^\alpha \phi_0(x)^{1-\alpha }
   \end{align*}
as in (\ref{Eq3.9}), so we can use (\ref{Eq3.10}) to show that $T_{\alpha A}(f)=\int_0^\infty \phi_{\alpha A}(x)f(x)m(dx)$ converges absolutely and defines a bounded linear operator for all $f\in L^p(m)$. 
\end{proof}

We now turn to the double coset hypergroup
${\bf X}={\mathrm{SL}}(2, {\bf C}))//{\mathrm{SU}}(2, {\bf C})$ mentioned in the introduction.
By \cite[p.~50]{Coifman:1976} this ${\bf X}$ has invariant measure $m(dx) = \sinh^2 x \, dx$.

\begin{corollary}\label{Cor3.8} 
Suppose that $\Vert \cos (tA)\Vert_{{\cal L}(E)}\leq \kappa \cosh t$ for all $t\in {\bf R}$. Then for $0 < \alpha < 1$ and all $f\in L^p(\sinh^2 x)$ with $1<p<2/(1+\alpha)$, 
  \[ T_{\alpha A}(f)=\int_0^\infty \frac{\sin (\alpha xA)}{\alpha A} f(x)\sinh x\, dx\]
defines a bounded linear operator on $E$.
\end{corollary}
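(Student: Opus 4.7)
The plan is to recognize Corollary~\ref{Cor3.8} as a direct application of Theorem~\ref{Thm3.7}, so the work lies in assembling the hypergroup data for ${\bf X} = \mathrm{SL}(2,{\bf C})//\mathrm{SU}(2,{\bf C})$ and checking the hypotheses. First I would record the explicit form of the spherical functions for this double coset hypergroup, namely
  \[ \phi_\lambda(x) = \frac{\sin(\lambda x)}{\lambda \sinh x}, \qquad x > 0, \]
which immediately admits a Laplace representation of the form (\ref{Eq3.3}) with
  \[ \tau_x(dt) = \frac{1}{2\sinh x}\,\mathbf{1}_{[-x,x]}(t)\, dt. \]
The positive character is $\phi_0(x) = x/\sinh x$, and one checks $\tau_x([-x,x]) = x/\sinh x = \phi_0(x)$ as required by Definition~\ref{Defn3.2}.

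Next I would verify the growth condition (\ref{Eq3.4}) with $\omega_0 = 1$. A direct integration gives
  \[ \int_{-x}^x \cosh t\,\tau_x(dt) = \frac{\sinh x}{\sinh x} = 1, \]
so (\ref{Eq3.4}) holds with $M_0 = 1$, and this value $\omega_0 = 1$ is precisely the exponent appearing in the hypothesis $\Vert \cos(tA)\Vert \leq \kappa \cosh t$, matching (\ref{Eq3.6}) exactly.

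Then I would check that $\phi_0 \in L^\nu(m)$ for every $\nu > 2$. With $m(dx) = \sinh^2 x\, dx$,
  \[ \int_0^\infty \Bigl(\frac{x}{\sinh x}\Bigr)^\nu \sinh^2 x\, dx = \int_0^\infty x^\nu (\sinh x)^{2-\nu}\, dx, \]
whose integrand behaves like $x^2$ as $x \to 0^+$ and like a constant multiple of $x^\nu e^{-(\nu-2)x}$ as $x \to \infty$, so the integral converges for all $\nu > 2$.

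Finally I would apply Theorem~\ref{Thm3.7}(iii). The map $\nu \mapsto p = \nu/(\nu+\alpha-1)$ is a continuous bijection from $(2,\infty)$ onto $(1, 2/(1+\alpha))$, so every $p$ in the range stated in the corollary corresponds to some eligible $\nu$. A short computation identifies
  \[ \phi_{\alpha A}(x) = \int_{-x}^x \cos(\alpha t A)\,\frac{dt}{2\sinh x} = \frac{\sin(\alpha x A)}{\alpha A\,\sinh x}, \]
and substituting into $T_{\alpha A}(f) = \int_0^\infty \phi_{\alpha A}(x)\, f(x)\, \sinh^2 x\, dx$ recovers the formula in the statement. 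No real obstacle is anticipated; the point worth emphasising is that $\omega_0 = 1$ is exactly the critical rate at which $\int_{-x}^x \cosh(\omega_0 t)\,\tau_x(dt)$ remains bounded, so the cosine-family hypothesis $\Vert\cos(tA)\Vert \le \kappa\cosh t$ is sharp for this hypergroup.
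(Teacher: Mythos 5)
Your proposal is correct and follows essentially the same route as the paper: identify the Trim\`eche/double-coset hypergroup with characters $\varphi_\lambda(x)=\sin(\lambda x)/(\lambda\sinh x)$ and Laplace measure $\tau_x(dt)=\mathbf{1}_{[-x,x]}(t)\,dt/(2\sinh x)$, verify (\ref{Eq3.4}) with $\omega_0=M_0=1$ and $\varphi_0\in L^\nu(\sinh^2x\,dx)$ for $\nu>2$, then invoke Theorem~\ref{Thm3.7}(iii). Your explicit checks that $\nu\mapsto\nu/(\nu+\alpha-1)$ bijects $(2,\infty)$ onto $(1,2/(1+\alpha))$ and that $\phi_{\alpha A}(x)=\sin(\alpha xA)/(\alpha A\sinh x)$ recovers the stated integral are details the paper leaves implicit, and they are correct.
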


\begin{proof} By results  of Trim\`eche (see \cite{Trimeche:1981} or \cite[p.~211]{Bloom:1994}),
there exists a commutative hypergroup on $[0,\infty )$ that has invariant measure  $2^2\sinh^2 x \, dx$. We introduce
   \begin{equation*}\label{Eq3.12} 
     \varphi_\lambda (x)
        =\frac{\sin \lambda x}{\lambda \sinh x}
        =\int_{-x}^x \frac{\cos \lambda t}{2\sinh x} \, dt
            \qquad (\lambda\in {\bf C})
   \end{equation*}
so that $\varphi_\lambda $ is a bounded
multiplicative
function for $\lambda \in \Sigma_1$ and so that
$\varphi_{\pm i}$ is
the trivial character, so that $\omega_0=1$. The Plancherel measure is
 \begin{equation*}
  \pi_0(d\lambda )
    ={\frac{\lambda^2}{4\pi}}{\bf I}_{(0,\infty)}(\lambda )\, d\lambda ,  
 \end{equation*}
so that $\varphi_0(x)=x/\sinh x$ is the unique positive character in the support of $\pi_0$. Condition (\ref{Eq3.6}) holds by hypothesis, while (\ref{Eq3.4}) is immediate. Also
\[ \int_0^\infty \varphi_0(x)^\nu \sinh^2x \,dx=\int_0^\infty x^\nu \sinh^{2-\nu} x\, dx\]
converges for all $\nu >2.$ So we can apply Theorem \ref{Thm3.7} with $p = \nu/(\nu+\alpha - 1)$.  
\end{proof}

\section{Operational calculus for Sturm--Liouville hypergroups}\label{Sec:SLH}

In this section we focus on applying the operational calculus described in Section~\ref{Sec3} to hypergroups associated to certain differential operators of the form
  \[ L \phi(x) = -\frac{d^2\phi}{dx^2}
    -\frac{m'(x)}{m(x)}\frac{d\phi}{dx}, \qquad (x \ge 0).
  \]
Under suitable conditions on the function $m$, one can define a hypergroup structure on ${\bf X} = [0,\infty)$ for which the characters correspond to suitably normalized eigenfunctions of this operator. The Haar measure for these hypergroups is just $m(x)\, dx$ where $dx$ is the usual Lebesgue measure on ${\bf X}$.  

Canonical examples here include taking $m(x) = \sinh^k x$ (giving a Jacobi hypergroup as in Corollary~\ref{Cor3.8}) and Example \ref{Ex6.1}; indeed, the results are mainly of significance when 
$m(x)$ grows exponentially as $x\rightarrow\infty$. 
For our purposes, the main requirement on the hypergroup is that the characters on ${\bf X}$ have a Laplace representation. Given this, we can make use of the Fourier transform (\ref{Eq3.2}) which is entirely determined
 by $m$ and the eigenfunctions of $L$.  
 Chebli \cite{Chebli:1974} \cite{Chebli:1979} and Trim\`eche \cite{Trimeche:1981} gave sufficient condition on $m$ to ensure existence of a hypergroup structure, 
 and they also gave sufficient conditions for the characters to have a Laplace representation. See also \cite[Theorem 3.5.58]{Bloom:1994}.

\begin{definition}\label{Def4.1}  Suppose that $\omega_0\geq 0$ and $\gamma >-1/2$. We say that a function $m: [0,\infty) \to [0,\infty)$ satisfies ($H(\omega_0)$) if:
\begin{enumerate}
 \item $m(x)=x^{2\gamma+1}q(x)$ where $q\in C^\infty ({\bf R})$ is even,
positive and $m(x)/x^{2\gamma +1}\rightarrow q(0)>0$ as $x\rightarrow 0+$;
 \item $m(x)$ increases to infinity as
$x\rightarrow\infty$, and $m'(x)/m(x)\rightarrow 2\omega_0$
as
$x\rightarrow\infty$; and either
\item $m'(x)/m(x)$ is decreasing; or
  \item  the function
  \begin{equation*}\label{Eq4.1}
  Q(x)=\frac{1}{2} \Bigl(\frac{q'}{q} \Bigr)'
        +\frac{1}{4}\Bigl(\frac{q'}{q}\Bigr)^2
         +\frac{2\gamma+1}{2x} \Bigl( \frac{q'}{q}\Bigr)
          -\omega_0^2. 
  \end{equation*}
  is positive, decreasing and integrable with respect to Lebesgue measure over $(0, \infty )$.
\end{enumerate}
\end{definition}

\begin{lemma}\label{Lem4.2}
Suppose that $\omega_0 > 0$ and that $m$ satisfies $(H(\omega_0))$.
Then
\begin{enumerate}
   \item there exists a hypergroup on $[0, \infty )$ such that $x^{-} = x$;
   \item the solutions of
  \begin{equation}\label{Eq4.2}
    -\frac{d^2\phi_\lambda}{dx^2}
    -\frac{m'(x)}{m(x)}\frac{d\phi_\lambda}{dx}
    = (\omega_0^2+\lambda^2) \phi_\lambda
    \end{equation}
   such that $\phi_\lambda (0)=1$, and $\phi_\lambda'(0)=0$ for
   $\lambda \geq 0$ are characters in ${\bf S}$;
  \item $\phi_\lambda(x)$ has a Laplace representation as in (\ref{Eq3.3}),
      where $\pm i\omega_0$ corresponds to the trivial character, and the bound (\ref{Eq3.4}) holds;
  \item $\hat {\bf X}={\bf R}\cup [-i\omega_0, i\omega_0]$.
\end{enumerate}
\end{lemma}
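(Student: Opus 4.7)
The plan is to derive (i), (ii), and the existence of the representation in (iii) from the classical Chebli--Trim\`eche--Zeuner theory, obtain the bound (\ref{Eq3.4}) by evaluating the Laplace representation at the trivial character, and then establish (iv) via asymptotics at infinity together with a reality argument.

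Under hypothesis $(H(\omega_0))$, one constructs generalized translation operators $T^x f(y)$ as solutions of a Sturm--Liouville Cauchy problem, and these yield a commutative hypergroup on $[0,\infty)$ with Haar measure $m(x)\,dx$ and trivial involution $x^-=x$ (see \cite[Theorem~3.5.58]{Bloom:1994} and \cite{Chebli:1974,Chebli:1979,Trimeche:1981}); the bounded multiplicative functions are exactly the bounded normalized solutions $\phi_\lambda$ of (\ref{Eq4.2}), and for $\lambda\ge 0$ these lie in the Plancherel support ${\bf S}$ by Levitan's theory. Trim\`eche's transmutation operator then produces the positive measures $\tau_x$ giving (\ref{Eq3.3}). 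To get the bound (\ref{Eq3.4}), note that $\phi\equiv 1$ solves (\ref{Eq4.2}) exactly when $\omega_0^2+\lambda^2=0$, so by uniqueness of the initial value problem $\phi_{\pm i\omega_0}\equiv 1$; substituting $\lambda=i\omega_0$ into (\ref{Eq3.3}) gives
\[
1 \;=\; \phi_{i\omega_0}(x) \;=\; \int_{-x}^x \cosh(\omega_0 t)\,\tau_x(dt),
\]
so (\ref{Eq3.4}) holds with $M_0=1$.

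For (iv), one inclusion is given by Lemma~\ref{3.4(i)}(iii). For the reverse I would work in two stages. First, a Liouville substitution $\psi_\lambda=m^{1/2}\phi_\lambda$ reduces (\ref{Eq4.2}) to a Schr\"odinger equation $-\psi''+V\psi=\lambda^2\psi$ with $V(x)\to 0$ as $x\to\infty$; a Levinson--Jost analysis produces a basis with $\psi\sim e^{\pm i\lambda x}$, so $\phi_\lambda(x)\sim m(x)^{-1/2}e^{\pm i\lambda x}$ with moduli $e^{(-\omega_0\mp v)x}$ for $\lambda=u+iv$. If $|v|>\omega_0$ then boundedness of $\phi_\lambda$ would force it to coincide with the decaying solution, placing $\phi_\lambda$ in $L^2(m)$; but then $\omega_0^2+\lambda^2$ would be an eigenvalue of the self-adjoint operator $L$, forcing $\lambda^2\in[-\omega_0^2,\infty)$ and contradicting $|v|>\omega_0$. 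Thus any bounded $\phi_\lambda$ has $\lambda\in\overline{\Sigma_{\omega_0}}$. Second, the condition $x^-=x$ requires $\phi_\lambda(x)\in{\bf R}$ for all $x$, but the Frobenius-type expansion near $0$ arising from $m(x)\sim q(0)x^{2\gamma+1}$ yields
\[
\phi_\lambda(x) \;=\; 1\,-\,\frac{\omega_0^2+\lambda^2}{4(\gamma+1)}\,x^2\,+\,O(x^4),
\]
so $\Im\phi_\lambda(x)=-uv\,x^2/(2(\gamma+1))+O(x^4)$, which fails to vanish near $0$ unless $uv=0$. Combining gives $\lambda\in{\bf R}\cup[-i\omega_0,i\omega_0]$.

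The main obstacle is controlling the asymptotics in the first step of (iv): hypothesis $(H(\omega_0))$ does not directly place the Liouville potential $V$ in a short-range class of Levinson type, so one must extract its decay from either the integrability of $Q$ or the monotonicity of $m'/m$. An alternative is to bypass this entirely and invoke the Jost-function constructions of \cite[Section~3.5]{Bloom:1994} or \cite{Trimeche:1981} directly.
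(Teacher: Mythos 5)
Your treatment of (i)--(iii) follows essentially the same route as the paper: existence of the hypergroup and of the Laplace representation are drawn from Chebli, Trim\`eche and Bloom--Heyer, and the bound (\ref{Eq3.4}) with $M_0=1$ is obtained exactly as in the paper, by observing that $\lambda=\pm i\omega_0$ gives the trivial character and evaluating (\ref{Eq3.3}) there. One detail the paper supplies that you elide: it checks that hypothesis $(H(\omega_0))$, in particular case (iv) of Definition~\ref{Def4.1}, implies the Sturm--Liouville conditions SL1.1 and SL2 of \cite[p.~202]{Bloom:1994} via the computation for $\beta=q'/q$, so that the cited existence theorems actually apply; case (iii) is referred to \cite{Chebli:1974}.

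For (iv), your reality step (the Frobenius expansion forcing $uv=0$) is sound and parallels the paper's, which instead notes that after Langer's transformation the equation (\ref{Eq4.5}) has real coefficients and real data, so $\phi_\lambda$ is real if and only if $\lambda^2\in{\bf R}$. The genuine gap is the one you flag yourself: your exclusion of $\vert\Im\lambda\vert>\omega_0$ rests on Levinson--Jost asymptotics $\psi_\lambda\sim e^{\pm i\lambda x}$, which require the transformed potential $V(x)=(4\gamma^2-1)/(4x^2)+Q(x)$ to be short range. Hypothesis $(H(\omega_0))$ guarantees integrability of $Q$ only in case (iv) of Definition~\ref{Def4.1}; in case (iii), where only monotonicity of $m'/m$ is assumed, $V$ need not be integrable near infinity, and deferring to ``Jost-function constructions'' in the literature is an appeal, not a proof. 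The paper circumvents this entirely, and you could adopt its device: since the reality step already restricts characters to ${\bf R}\cup i{\bf R}$, only purely imaginary $\lambda$ need be excluded, and there the Laplace representation itself supplies the lower bound. Indeed, using $\tau_x([-x,x])=\phi_0(x)$, the identity $1=\int_{-x}^x\cosh(\omega_0 t)\,\tau_x(dt)$, H\"older's inequality, and the pointwise bound $\cosh^\nu(\omega_0 t)\le\cosh(\nu\omega_0 t)$ for $\nu>1$, one gets $\phi_{i\nu\omega_0}(x)\ge\phi_0(x)^{1-\nu}$. Since $\phi_0(x)\to 0$ as $x\to\infty$ (an asymptotic fact needed only at the single parameter value $\lambda=0$, obtained by comparing (\ref{Eq4.5}) with the sine equation), $\phi_{i\nu\omega_0}$ is unbounded and hence not a character. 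This argument needs no ODE asymptotics for complex $\lambda$ and works uniformly in both cases of Definition~\ref{Def4.1}; without it, or some substitute for the missing decay of $V$, your proof of (iv) is incomplete.
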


\begin{proof}
(i) The case (iii) of Definition \ref{Def4.1} is covered in \cite{Chebli:1974}, so we emphasize case (iv). The function $\beta =q'/q$ satisfies
   \[ {\frac{1}{2}}\beta'-{\frac{1}{4}}\beta^2  +{\frac {m'\beta}{2m}}
          =\Bigl({\frac{q'}{q}}\Bigr)'+{\frac{1}{4}}\Bigl( {\frac{q'}{q}}\Bigr)^2 +{\frac{2\gamma +1}{2x}}
          =Q(x)+\omega_0^2,\]
so that $q$ satisfies SL1.1 and SL2 of \cite[p 202]{Bloom:1994}, so $m$ defines a Sturm--Liouville function the sense of \cite[Theorem 3.5.45]{Bloom:1994}.
There exists a hypergroup with convolution operation given by \cite[Section~3.5.21]{Bloom:1994}, as follows. 
The solution $u(x,y)$ of the differential equation
  \begin{equation*}\label{Eq4.3}
     - \frac{\partial^2u}{\partial x^2}  - \frac{m'(x)}{m(x)} \frac{\partial u}{\partial x} 
       = - \frac{\partial^2u}{\partial y^2}- \frac{m'(y)}{m(y)} \frac{\partial u}{\partial y}
  \end{equation*}
with initial conditions
  \[ u(x,0) = u(0,x) = f(x)
   \qquad \text{and} \qquad
   \frac{\partial u}{\partial x}(0,y) = \frac{\partial u}{\partial y}(x,0) = 0
   \]
gives $u(x,y) = \int_{\bf X} f(t)\, (\varepsilon_x \ast \varepsilon_y)(dt)$ (see \cite[2.5.35]{Bloom:1994}). Since $0 \in \mathop{\mathrm{supp}}(\varepsilon_x \ast \varepsilon_y)$, we can deduce that $x^{-} = x$ (see \cite[(HG7) p. 9]{Bloom:1994} and \cite{Zeuner:1989}). Moreover, the spectral analysis in \cite{Chebli:1974}, \cite{Chebli:1979} and \cite{Trimeche:1981} shows that ${\bf S}=[0, \infty)$.

(iii) Chebli \cite{Chebli:1974} and Bloom and Heyer \cite[Theorem 3.5.38]{Bloom:1994} showed that these eigenfunctions have a Laplace representation as in (\ref{Eq3.3}). Specifically, the
function $\lambda\mapsto \phi_\lambda (x)$ is entire,
and there exists a family of positive measures
such that
$\phi_\lambda (x)=\int_{-x}^x \cos (\lambda t)\tau_x(dt)$; in particular, $\lambda=\pm i\omega_0$ gives the trivial character and so 
(\ref{Eq3.4}) holds with $M_0=1$. 

(iv) Using Langer's transformation \cite[p. ~5]{Chebli:1979}, we let
$\phi_\lambda (x)=\psi_\lambda(x)/\sqrt{m(x)}$.
Then $\psi_\lambda$ satisfies
 \begin{equation}\label{Eq4.4}
   -\psi''_\lambda(x)+\Bigl(\frac{m''}{2m}
      -\Bigl(\frac{m'}{2m} \Bigr)^2
      -\omega_0^2\Bigr) \psi_\lambda (x)
    = \lambda^2\psi_\lambda(x),
  \end{equation}
that is   
\begin{equation}\label{Eq4.5}
   -\psi''_\lambda(x)+\Bigl(\frac{4\gamma^2-1}{4x^2} +Q(x)\Bigr) \psi_\lambda (x)
    = \lambda^2\psi_\lambda(x).
  \end{equation}  
Hence $\phi_\lambda (x)$ is real, if and only if $\lambda^2\in {\bf R}$; that is $\lambda\in {\bf R}\cup i{\bf R}$. By comparing (\ref{Eq4.5}) with the sine equation as in \cite[Theorem 1.5.7]{Hille:1969}, we see that $\phi_0(x)\rightarrow 0$ as $x\rightarrow\infty$.

For all $\nu>1$, we have by two application of H\"older's inequality
   \begin{align*}
     1 &= \int_{-x}^x\cosh (t\omega_0)\, \tau_x(dt)  \\ 
       &\leq \Bigl( \int_{-x}^x \cosh^\nu (\omega_0t)\,\tau_x(dt)\Bigr)^{1/\nu}
                                \Bigl( \int_{-x}^x \tau_x(dt)\Bigr)^{(\nu -1)/\nu} \\
       &\leq \Bigl( \int_{-x}^x \cosh (\nu\omega_0t)\,\tau_x(dt)\Bigr)^{1/\nu}
                   \Bigl( \int_{-x}^x \tau_x(dt)\Bigr)^{(\nu -1)/\nu},
    \end{align*}
which implies that $\phi_{i\nu \omega_0}(x)\geq \phi_0(x)^{1-\nu }$.  Hence $\phi_{i\nu\omega_0}(x)\rightarrow\infty$ as $x\rightarrow\infty$, so $\phi_{i\nu \omega_0}$ does not belong to $\hat {\bf X}$. Hence $\hat {\bf X}={\bf R}\cup [-i\omega_0, i\omega_0].$  
\end{proof}

Our aim is to now define ${\hat f}(A)$ for suitable $A$ and $f$ via the Fourier transform for such a Sturm--Liouville hypergroup. 

\begin{theorem}\label{Thm4.3} Suppose that $m$ and $\phi_\lambda$ are as in Lemma~\ref{Lem4.2} with $\omega_0>0$ and that $(\cos (tA))_{t\in {\bf R}}$ is a strongly continuous cosine family on
$E$ such that
   \begin{equation}\label{Eq4.7}
      \Vert \cos (tA)\Vert_{{\cal L}(E)}\leq \kappa \cosh (\omega_0t)\qquad (t\in {\bf R})
   \end{equation}
and some $\kappa<\infty$. Let $2<\nu <\infty$, $0<\alpha <1$ and $p=\nu /(\nu +\alpha -1)$.
Then 
\begin{enumerate}
    \item there exists a commutative hypergroup $({{\bf X}}, \ast )$ on $[0, \infty )$ such that  $\phi_\lambda$ is a
bounded multiplicative function on $({{\bf X}}, \ast )$ for all $\lambda\in \Sigma_{\omega_0}$;
    \item  the Fourier transform
$f\mapsto \hat f(\lambda )$ is bounded $L^p(m)\rightarrow H^\infty (\Sigma_{\alpha \omega_0})$;
    \item $(\phi_A(x))_{x\geq 0}$ as in (\ref{Eq3.5}) 
    gives a bounded
    family of linear operators on $E$, $T_A (f)=\int_0^\infty f(x)\phi_A(x)m(x)\,dx$ 
    defines a bounded linear operator on $E$ for all $f\in L^1(m)$, and $T_{A}(f\ast g)=T_{A} (f)T_{A}(g)$ for all $f, g\in L^1(m)$;
    \item the map $f \mapsto T_{\alpha A}(f)$ defined via (\ref{Eq3.7}) is bounded $L^p(m) \to \mathcal{L}(E)$.
\end{enumerate}
\end{theorem}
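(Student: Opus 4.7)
The proof is essentially an assembly job: Lemma~\ref{Lem4.2} has already done the hard analytic work of constructing the hypergroup and verifying the Laplace representation with the required bound, so the plan is to show that the Sturm--Liouville hypergroup $({\bf X},\ast)$ of Lemma~\ref{Lem4.2} meets the hypotheses of the abstract results in Section~\ref{Sec3} and then quote them in turn.

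For part (i), I would invoke Lemma~\ref{Lem4.2}(i) to obtain the commutative hypergroup on $[0,\infty)$ with involution $x^- = x$, and Lemma~\ref{Lem4.2}(iii) to obtain the Laplace representation
\[ \phi_\lambda(x) = \int_{-x}^x \cos(\lambda t)\,\tau_x(dt), \]
with $M_0 = 1$ so that (\ref{Eq3.4}) is satisfied. Conclusion (i) is then exactly Lemma~\ref{3.4(i)}(i) applied in this setting: for $\lambda \in \Sigma_{\omega_0}$, the bound $|\cos(\lambda t)| \le \cosh(\omega_0 t)$ and the Laplace representation give $|\phi_\lambda(x)| \le 1$, and multiplicativity extends from real $\lambda$ to the strip by analytic continuation, using that $\lambda \mapsto \phi_\lambda(x)$ is entire (Lemma~\ref{Lem4.2}(iii)).

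For part (ii), the key input beyond what Lemma~\ref{3.4(i)} provides is integrability of a suitable power of $\phi_0$. From the Langer transformation analysis in the proof of Lemma~\ref{Lem4.2}(iv), $\phi_0(x) \to 0$ as $x \to \infty$, and more importantly one needs $\phi_0 \in L^\nu(m)$ for some $\nu > 2$; this is part of what Theorem~\ref{Thm3.7} requires. I expect this to be the one point requiring brief verification: using the asymptotic behaviour $m'(x)/m(x) \to 2\omega_0$ and the Sturm comparison that governs $\phi_0$, one checks that $\phi_0(x) = O(e^{-\omega_0 x})$ while $m(x) = O(e^{2\omega_0 x})$, so $|\phi_0(x)|^\nu m(x)$ is integrable for $\nu > 2$. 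Once this is in hand, Theorem~\ref{Thm3.7}(i) applies directly and yields that $\hat f \in H^\infty(\Sigma_{\alpha\omega_0})$ with norm controlled by $\|f\|_{L^p(m)}$, which is conclusion (ii).

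For parts (iii) and (iv), the cosine family bound (\ref{Eq4.7}) is precisely the hypothesis (\ref{Eq3.6}) of Proposition~\ref{Prop3.5} and Theorem~\ref{Thm3.7}. Thus part (iii) is Proposition~\ref{Prop3.5}(i)--(iii) read off verbatim: the operators $\phi_A(x) = \int_{-x}^x \cos(tA)\,\tau_x(dt)$ are uniformly bounded in $x$ by $\kappa M_0 = \kappa$, the integral defining $T_A(f)$ converges strongly and defines a bounded operator for $f \in L^1(m)$, and the multiplicativity $T_A(f \ast g) = T_A(f) T_A(g)$ follows from (\ref{Eq3.8}) together with the addition formula for the cosine family. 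Part (iv) then follows from Theorem~\ref{Thm3.7}(iii): the same integrability of $\phi_0$ used for (ii) together with the chain of estimates (\ref{Eq3.9})--(\ref{Eq3.10}) applied to $\phi_{\alpha A}$ shows absolute convergence and boundedness of $T_{\alpha A}: L^p(m) \to \mathcal{L}(E)$. The only real obstacle in the whole argument is ensuring $\phi_0 \in L^\nu(m)$ for $\nu > 2$; everything else is a direct appeal to results already established in Sections~\ref{HypGps} and \ref{Sec3}.
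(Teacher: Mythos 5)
Your proposal is correct and follows essentially the same route as the paper: part (i) is read off from Lemma~\ref{Lem4.2}, part (iii) from Proposition~\ref{Prop3.5} (using that the trivial character at $\lambda=i\omega_0$ gives (\ref{Eq3.4}) with $M_0=1$), and parts (ii) and (iv) from Theorem~\ref{Thm3.7} once $\phi_0\in L^\nu(m)$ is verified. The paper carries out that single verification via the Langer transformation $\phi_0=\psi_0/\sqrt{m}$ and comparison of (\ref{Eq4.5}) with the sine equation, writing $\int_0^\infty|\phi_0|^\nu m\,dx=\int_0^\infty|\psi_0|^\nu m^{1-\nu/2}\,dx$; your direct estimate $\phi_0(x)=O(e^{-\omega_0 x})$, $m(x)=O(e^{2\omega_0 x})$ is the same idea up to harmless subexponential corrections ($\psi_0$ may grow polynomially, and $m'/m\to 2\omega_0$ only yields exponential bounds up to an $\varepsilon$), which do not affect convergence for $\nu>2$.
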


\begin{proof}
(i) This follows from Lemma \ref{Lem4.2}.

(ii) By comparing (\ref{Eq4.5}) with the sine equation, as in \cite[p. 527]{Hille:1969} one obtains a bound
$\psi_\lambda (x)=O(e^{\eta x})$ as $x\rightarrow\infty$
where $\eta =\vert\Im \lambda\vert >0$. In particular, we have
    \begin{equation*}
      \int_0^\infty \vert \phi_0(x)\vert^\nu m(x)\,dx=\int_0^\infty \bigl\vert \psi_0 (x)\bigr\vert^{\nu}m(x)^{1-(\nu/2)}\, dx
    \end{equation*} 
which converges for $2<\nu <\infty$. By Lemma~\ref{Lem4.2}, $\phi_\lambda$ is a bounded
multiplicative function for
$\lambda \in \Sigma_{\omega_0}$, and has a Laplace representation. Hence we can apply Theorem \ref{Thm3.7}(ii). Note that for $\lambda >0$, all solutions of (\ref{Eq4.5}) oscillate boundedly, so $\phi_\lambda$ is not in $L^2(m)$. Thus we cannot extend this proof to the case $\nu =2$.

(iii) By Lemma \ref{Lem4.2}, the hypergroup has a Laplace representation. Condition (\ref{Eq3.4}) holds since the trivial character arises for $\lambda =i\omega_0$ so the Laplace representation gives $\int_{-x}^x\cosh t\omega_0\tau_x(dt)=1$,
while (\ref{Eq3.6}) holds by hypothesis. Thus all the hypotheses
of Proposition~\ref{Prop3.5} apply.

(iv) Theorem \ref{Thm3.7}(iii) applies.
\end{proof}

Trim\`eche \cite[section 8]{Trimeche:1981} considers the difference operators 
    \[  \sigma_t f(x) =  \tfrac{1}{2}(f(x+t)+f(x-t))\qquad (x,t\in {\bf R})\]
 in relation to the Fourier transform for certain Jacobi hypergroups. Definition \ref{Def4.1} does not cover the Jacobi hypergroups with $m(x)=\cosh^k x$, since $\gamma =-1/2$ is excluded. However, such 
examples are otherwise addressed by the following result, which enables one to use the transference theorem for locally bounded groups from \cite{Blower:1996}. To clarify the various operations, we introduce
   \begin{equation*}
     {\cal X}f(x)=\int_{-x}^x f(t)\tau_x(dt)
   \end{equation*}
for $f \in C_{c,ev}^\infty ({\bf R}; {\bf R}))$, the compactly supported and even functions in $C^\infty ({\bf R}; {\bf R})$. 
For $t \in {\bf R}$, let $S_t$ denote the translation operator $S_tf(x) =  f(x-t)$.

\begin{proposition}\label{Prop4.4} 
Suppose that $q\in C^\infty ({\bf R})$ is positive and even, and that there exist $\kappa_1, \kappa_2$ such that $\kappa_1\leq q'(x)/q(x)\leq \kappa_2$ for all $x\in {\bf R}$. Let $1\leq p<\infty$. Then
\begin{enumerate}
  \item
   $\displaystyle
      \cos (t\sqrt{L}){\cal X}f = \int_{-x}^x \sigma_tf(s) \, \tau_x(ds),
                 \quad (f\in C_{c,ev}^\infty ({\bf R}; {\bf R}))$;
  \item $(S_t)_{t\in {\bf R}}$ defines a $C_0$ operator group on 
$L^p({\bf R}; q(x)\, dx)$ such that 
    $\Vert S_t\Vert_{{\cal L}(L^p)}\leq M_pe^{w_p\vert t\vert }$ for all $t \in {\bf R}$,
   where $w_p=\max \{ \vert \kappa_1\vert, \vert \kappa_2\vert\}/p$;
  \item there exists a generator $A$ such that $\cos (tA) = \tfrac{1}{2}( S_t+S_{-t})$ for $t\geq 0$ defines a strongly continuous cosine family on $L^p({\bf R}; q(x)\, dx)$ satisfying (\ref{Eq3.6}).
\end{enumerate}
\end{proposition}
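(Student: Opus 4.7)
Parts (ii) and (iii) are direct computations. For (ii), I would compute $\|S_tf\|_{L^p(q\,dx)}^p = \int_{\bf R} |f(y)|^p q(y+t)\,dy$ via the change of variable $y = x-t$. The hypothesis $\kappa_1 \le q'/q \le \kappa_2$ integrated gives, pointwise in $y$,
\[
q(y+t)/q(y) = \exp\Bigl(\int_y^{y+t}(q'/q)(u)\,du\Bigr) \le e^{\max\{|\kappa_1|,|\kappa_2|\}|t|},
\]
so $\|S_t\|_{{\cal L}(L^p)} \le e^{w_p|t|}$ with $w_p = \max\{|\kappa_1|,|\kappa_2|\}/p$ and $M_p = 1$. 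Strong continuity follows from the known strong continuity of translation on $L^p({\bf R};dx)$ applied to $C_c({\bf R})$-approximants, together with the local uniform norm bound on $(S_t)$.

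For (iii), $C(t) := \tfrac{1}{2}(S_t + S_{-t})$ inherits strong continuity from $(S_t)_t$, and the d'Alembert identity is immediate from $S_aS_b = S_{a+b}$:
\[
2C(s)C(t) = \tfrac{1}{2}(S_{s+t} + S_{s-t} + S_{-s+t} + S_{-s-t}) = C(s+t) + C(s-t).
\]
The estimate $\|C(t)\|_{{\cal L}(L^p)} \le e^{w_p|t|} \le 2\cosh(w_p t)$ yields (\ref{Eq3.6}) with $\omega_0 = w_p$ and $\kappa = 2$; the generator $A$ is then produced from the standard theory of cosine families.

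Part (i) is the substantive identity, and is best approached by first checking it on characters and then extending. Put $e_\lambda(s) := \cos(\lambda s)$; a trigonometric identity gives $\sigma_t e_\lambda(s) = \cos(\lambda t)e_\lambda(s)$, so integrating against $\tau_x$ gives
\[
\int_{-x}^x \sigma_t e_\lambda(s)\,\tau_x(ds) = \cos(\lambda t)\int_{-x}^x \cos(\lambda s)\,\tau_x(ds) = \cos(\lambda t)\phi_\lambda(x)
\]
by the Laplace representation. On the left-hand side, the same representation gives ${\cal X}e_\lambda(x) = \phi_\lambda(x)$, and since $\phi_\lambda$ is an eigenfunction of $L$ in the Sturm--Liouville normalization for which $\cos(t\sqrt L)\phi_\lambda = \cos(\lambda t)\phi_\lambda$, both sides equal $\cos(\lambda t)\phi_\lambda(x)$. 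For a general $f \in C_{c,ev}^\infty({\bf R};{\bf R})$, I would expand via the inverse cosine-Fourier transform $f(s) = \pi^{-1}\int_0^\infty \hat f(\lambda)\cos(\lambda s)\,d\lambda$ and apply the identity under the integral sign, using Schwartz decay of $\hat f$ together with polynomial bounds in $\lambda$ on $\phi_\lambda(x)$ to justify Fubini.

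The principal obstacle is making this interchange rigorous in the strong-operator sense on the Banach space $E$ underlying the cosine family, particularly when $\phi_\lambda(x)$ grows in imaginary directions of $\lambda$. A cleaner alternative, which I would use as a backup, is a wave-equation uniqueness argument: both $u(x,t) := \cos(t\sqrt L){\cal X}f(x)$ and $v(x,t) := \int_{-x}^x \sigma_t f(s)\,\tau_x(ds)$ are $C^2$ in $(x,t)$ and satisfy the Sturm--Liouville wave equation $\partial_t^2 w + L_x w = 0$ with data $w(\cdot,0) = {\cal X}f$ and $\partial_t w(\cdot,0) = 0$. For $u$ this is by construction; for $v$ one differentiates twice in $t$ under the integral, uses that $\partial_t^2 \sigma_t f = \partial_s^2 \sigma_t f$ (since $\sigma_t f$ solves the $1$-D wave equation in $(s,t)$), and invokes the transmutation identity $\int_{-x}^x g''(s)\,\tau_x(ds) = -L_x \int_{-x}^x g(s)\,\tau_x(ds)$ for smooth even $g$, which is the defining property of the Laplace representation in the work of Chebli and Trim\`eche. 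Uniqueness for this well-posed problem then forces $u = v$.
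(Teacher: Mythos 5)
Your proposal is correct and takes essentially the same route as the paper: part (i) is proved by checking the identity on the cosines $\cos(\lambda s)$ via the Laplace representation, the spectral relation $\cos(t\sqrt{L})\phi_\lambda=\cos(t\lambda)\phi_\lambda$ and the product formula for $\sigma_t$, then extending by the Fourier cosine transform; part (ii) by the same change of variables, with your integration of $q'/q$ merely replacing the paper's sign-by-sign case analysis; and part (iii) by passing from the group $(S_t)$ to the cosine family, where the paper cites Goldstein's Remark~8.11 and you verify d'Alembert's identity directly. These are only stylistic variants (your wave-equation uniqueness argument is offered only as a backup), so there is nothing substantive to add.
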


\begin{proof} (i) Trim\`eche \cite{Trimeche:1981} has a similar result in different notation, so we give the proof for completeness. Observe that ${\cal X}: \cos s\lambda\mapsto \phi_\lambda (x)$ by the Laplace representation (\ref{Eq3.3}), and $\cos (t\sqrt{L})\phi_\lambda (x)=\cos (t\lambda )\phi_\lambda (x)$ for $\phi_\lambda\in {\bf S}$ by the spectral theorem. Now $\sigma_t: \cos (s\lambda)\mapsto \cos (t\lambda) \cos (s\lambda )$. Hence the required identity holds for $\cos (s\lambda )$, and then we can use the Fourier cosine transform to obtain the stated result. 

(ii) We have
\begin{equation}\int_{-\infty}^\infty \vert S_tf(x)\vert^pq(x)\, dx=\int_{-\infty}^\infty \vert f(x)\vert^pq(x+t)\, dx\end{equation}
so it suffices to bound $q(x+t)/q(x)$ from above for all $x$ in terms of $t$. This splits into cases according to the signs of $x$ and $t$ which are all elementary estimates.

(iii) This follows from (ii) by \cite[Remark 8.11]{Goldstein:1985}.
\end{proof}     

\begin{remark}\label{Rem4.5} Consider the case of Definition \ref{Def4.1} in which $q=1$, so that $m(x)=x^{2\gamma +1}$ and $\omega_0=0$. Then $L^p(m)$ has a strongly continuous group $(V_t)_{t\in {\bf R}}$ of dilation operators $V_t:f(x)\mapsto e^{(2\gamma +2)t/p}f(e^tx)$ for $1\leq p<\infty$, such that $\Vert V_tf\Vert_{L^p}=\Vert f\Vert_{L^p}$ for all $t\in {\bf R}$ and $f\in L^p(m)$. The transference theory of \cite{Berkson:1987} applies to this dilation group.

Let $J_\gamma$ denote Bessel's function of the first
kind of order $\gamma$ and define
  \[ \psi_\lambda (x)=\lambda^{-\gamma}x^{1/2}2^\gamma J_\gamma
        (\lambda x)
      =\frac{\Gamma (\gamma +1)x^{\gamma +1/2}}{\Gamma (1/2)
                                  \Gamma (\gamma+1/2)  }
       \int_{-x}^x \Bigl(1-\frac{s^2}{x^2}\Bigr)^\gamma
               \frac{\cos s\lambda }{\sqrt{x^2-s^2}}\, ds, \]
so that $\lambda\in {\bf R}$,
$\lambda \mapsto \psi_\lambda (x)$ is entire
and of exponential type, and
  \[ -\psi_\lambda''(x)+\frac{4\gamma^2-1}{4x^2}\psi_\lambda(x)
    =\lambda^2\psi_\lambda(x). \]
 The hypergroup associated with $J_0$ is studied by detail by Jewett \cite{Jewett:1975}, who finds that the trivial character lies in ${\bf S}$. 
Taylor uses the operational calculus associated with Bessel functions of the first kind \cite[p.~1120]{Taylor:2009} to obtain bounds on certain differential operators associated with the wave equation on Euclidean space. Fractional integration operators for the Hankel--Bessel transform are discussed in \cite[section 5]{Trimeche:1981}. By contrast, the examples in the following sections have $\omega_0>0$.
\end{remark}

\section{Fractional integration of cosine families}\label{Sec:MF}

Several more classical transforms and associated families of functions fall within this framework. In this section we look at the case where $m(x) = \sinh x$. The hypergroup Fourier transform in this setting is the Mehler--Fock transform of order zero.

\begin{definition}\label{Def5.1}
\begin{enumerate}
  \item For $m,n=0,1, \dots,$ the \textit{associated Legendre functions} may be defined as in \cite[p.156]{Erdelyi:1953v1} to be the functions $P_{\nu}^\mu$ such that
\[ P_\nu^\mu (\cosh x )=\sqrt{ {\frac{2}{\pi}}}{\frac {(\sinh x)^\mu}{\Gamma ((1/2)-\mu)}}\int_0^x {\frac{\cosh (\nu+(1/2))y}{(\cosh x-\cosh y)^{\mu +(1/2)}}} dy.\] 
  \item \textit{Legendre's functions} are defined by
 \[ \phi_\lambda (x)= P_{i\lambda  -(1/2)}(\cosh x)
   = \frac{1}{\pi \sqrt{2}}
   \int_{-x}^x \frac{\cos \lambda y}{\sqrt{\cosh x-\cosh y}}\, dy\qquad
(\lambda\in {\bf C} ). \]
See \cite[(7.4.1)]{Sneddon:1972}. An alternative notation is $R^{(0,0)}_z=P_z$ with $z=i\lambda -(1/2)$ as in \cite[p.~68]{Trimeche:1981}.  
  \item
The \textit{Mehler--Fock transform of order zero} of $f\in L^1(\sinh x\, dx)$ is
 \[ \hat f(\lambda )=\int_0^\infty f(x)\phi_\lambda (x)\sinh x\, dx.\]
\end{enumerate}
\end{definition}

Legendre's functions are associated with Laplace's equation in toroidal
coordinates, and sometimes called toroidal functions; see \cite{Mehler:1881,Sneddon:1972}. Further details of the Mehler--Fock transform of order zero can be found in \cite[p.~390]{Sneddon:1972}.

\begin{proposition}\label{Prop5.1}
Let $(\cos (tA))_{t\in {\bf R}}$ be a cosine family on $E$ and suppose that there exists $\kappa$  such that $\Vert \cos (tA)\Vert_{{\cal L}(E)}\leq \kappa \cosh (t/2)$
 for all $t\geq 0$. Then
 \begin{enumerate}
 \item there exists a hypergroup
 $([0, \infty ),\ast )$ with Laplace representation (\ref{Eq3.3}) such that $f\mapsto \hat f$ is the Mehler--Fock transform of order zero;
 \item  $(\phi_A(x))_{x>0}$ is a bounded family of operators; 
 \item  the integral 
    \begin{equation}\label{Eq5.1} 
      T_{A}(f)=\int_0^\infty \phi_{A}(x)f(x)\sinh x\, dx\qquad (f\in L^1(\sinh x \, dx)) 
    \end{equation}
      defines a bounded linear operator such that $T_A(g\ast h)=T_A(g)T_A(h)$ for all $g,h\in L^1(\sinh x\, dx)$; 
  \item for $2< \nu < \infty$, $0 < \alpha < 1$ and $p = \nu/(\nu+\alpha - 1)$, the linear operator 
$f\mapsto T_{\alpha A}(f)$ is bounded $L^p(\sinh x\, dx) \to \mathcal{L}(E)$.
\end{enumerate}
\end{proposition}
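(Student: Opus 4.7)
The plan is to recognize this as a direct specialization of Theorem \ref{Thm4.3} with a carefully chosen Sturm--Liouville weight. Set $m(x) = \sinh x$ and write it as $m(x) = x^{2\gamma+1} q(x)$ with $\gamma = 0$ and $q(x) = \sinh(x)/x$. Then $q$ extends to an even, positive, smooth function on ${\bf R}$ with $q(0) = 1$, and $m'(x)/m(x) = \coth x \to 1$ as $x\to\infty$, which identifies $\omega_0 = 1/2$. Since $\coth x$ is strictly decreasing on $(0,\infty)$, alternative (iii) of Definition \ref{Def4.1} holds, so $m$ satisfies $(H(1/2))$.

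The first step is to invoke Lemma \ref{Lem4.2}, which produces a commutative hypergroup structure on $[0,\infty)$ whose characters are the normalized solutions of
\[
    -\phi_\lambda''(x) - \coth x\, \phi_\lambda'(x) = \bigl(\tfrac{1}{4} + \lambda^2\bigr)\phi_\lambda(x), \qquad \phi_\lambda(0)=1,\ \phi_\lambda'(0)=0,
\]
and which admit a Laplace representation as in (\ref{Eq3.3}). A comparison with Legendre's equation identifies these characters as $\phi_\lambda(x) = P_{i\lambda - 1/2}(\cosh x)$, and the integral representation in Definition \ref{Def5.1}(ii) exhibits the representing measure $\tau_x$ explicitly as a multiple of $(\cosh x - \cosh y)^{-1/2}\,dy$ on $[-x,x]$. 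Hence the hypergroup Fourier transform of Section \ref{HypGps} coincides with the Mehler--Fock transform of order zero, giving (i). Because $\lambda = \pm i/2$ corresponds to the trivial character, the Laplace representation forces $\int_{-x}^x \cosh(t/2)\, \tau_x(dt) = 1$, so (\ref{Eq3.4}) holds with $M_0 = 1$ and $\omega_0 = 1/2$; combined with the hypothesis $\Vert \cos(tA)\Vert_{{\cal L}(E)} \le \kappa \cosh(t/2)$, this matches (\ref{Eq3.6}), and Proposition \ref{Prop3.5} immediately yields (ii) (with $\Vert \phi_A(x)\Vert \le \kappa$) and (iii).

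For part (iv) I would appeal to Theorem \ref{Thm4.3}(iv), which reduces to verifying that $\phi_0 \in L^\nu(\sinh x\,dx)$ for every $\nu > 2$. Using the Langer transformation $\phi_0(x) = \psi_0(x)/\sqrt{\sinh x}$ as in the proof of Theorem \ref{Thm4.3}(ii), the function $\psi_0$ satisfies (\ref{Eq4.5}) whose potential is bounded at infinity (since $Q(x) \to 0$), so $\psi_0$ remains bounded and
\[
    \int_0^\infty |\phi_0(x)|^\nu \sinh x\, dx = \int_0^\infty |\psi_0(x)|^\nu (\sinh x)^{1-\nu/2}\, dx < \infty
\]
for all $\nu > 2$. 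Theorem \ref{Thm4.3}(iv) (equivalently, Theorem \ref{Thm3.7}(iii)) then gives the boundedness of $f \mapsto T_{\alpha A}(f)$ from $L^p(\sinh x\, dx)$ to ${\cal L}(E)$ with $p = \nu/(\nu + \alpha - 1)$. The only real obstacle is the bookkeeping involved in matching the classical Legendre normalization with the hypergroup characters of Lemma \ref{Lem4.2}; once that identification is confirmed, the four conclusions are direct applications of the abstract machinery of Sections \ref{Sec3} and \ref{Sec:SLH}.
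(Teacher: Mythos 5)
Your proposal is correct in substance, and it reaches the result by a genuinely different organization than the paper. You verify that $m(x)=\sinh x$ satisfies $(H(1/2))$ of Definition~\ref{Def4.1} (with $\gamma=0$, $q(x)=\sinh x/x$ even, smooth and positive, and $\coth x$ decreasing) and then quote Lemma~\ref{Lem4.2} and Theorem~\ref{Thm4.3} wholesale. The paper instead works explicitly: it cites Mehler's equation and Trim\`eche's construction of the hypergroup (including the explicit Plancherel measure $\lambda\tanh(\pi\lambda)\,d\lambda$), verifies (\ref{Eq3.4}) by a direct estimate of $\int_{-x}^x\cosh(y/2)(\cosh x-\cosh y)^{-1/2}\,dy$ using $\cosh x-\cosh y=2(\sinh^2(x/2)-\sinh^2(y/2))$, and bounds $\phi_0$ explicitly by $2\sqrt{2x}/(\pi\sqrt{\sinh(x/2)})$ before applying Proposition~\ref{Prop3.5} and Theorem~\ref{Thm3.7}. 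Your route buys economy: in particular, obtaining (\ref{Eq3.4}) with $M_0=1$ from the fact that $\lambda=\pm i/2$ gives the trivial character is cleaner than the paper's kernel estimate, and parts (ii)--(iv) then follow from the abstract machinery of Sections~\ref{Sec3} and~\ref{Sec:SLH}. What the explicit route buys is concrete information (the Plancherel measure, sharp asymptotics of $\phi_0$) and independence from the somewhat terse internal estimates in the proof of Theorem~\ref{Thm4.3}.

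One intermediate assertion in your part (iv) is false, though harmlessly so: $\psi_0$ is \emph{not} bounded. For the equation (\ref{Eq4.5}) at $\lambda=0$, the potential here is $V(x)=-1/(4\sinh^2 x)$, which decays and is integrable at infinity, so solutions are asymptotically \emph{linear}, not bounded (compare $V\equiv 0$, whose solutions are $a+bx$); indeed $\psi_0(x)\asymp x$, consistent with the paper's own estimate $\phi_0(x)=O(xe^{-x/2})$. The conclusion you need, $\phi_0\in L^\nu(\sinh x\,dx)$ for all $\nu>2$, survives, since $x^\nu(\sinh x)^{1-\nu/2}$ is still integrable at infinity. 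Alternatively, since Theorem~\ref{Thm4.3}(iv) carries no hypothesis beyond those of Lemma~\ref{Lem4.2} and the cosine bound (\ref{Eq4.7}), you may cite it as a black box and omit the re-derivation entirely.
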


\begin{proof} (i) Mehler \cite[(8b) of page 184]{Mehler:1881} showed
that
  \[ -\phi_\lambda''(x) - \coth x\, \phi_\lambda'(x)
    = (\lambda^2+({1/4}))\phi_\lambda(x).\]
Trim\`eche \cite{Trimeche:1981} introduces a
hypergroup structure on $(0, \infty )$ such that the
$\phi_\lambda $ for
$\lambda\in \Sigma_{1/2}$ are bounded and multiplicative
for this hypergroup, and he shows that the invariant measure
and the Plancherel measure are
supported on $[0, \infty)$, and satisfy
  \begin{equation}\label{Eq5.2}
  m(x)\,dx = \sinh x\,dx,\quad
  \pi_0 (d\lambda)=
  \frac{2\vert \Gamma ((1/4)+(i\lambda /2))
    \Gamma((3/4)+(i\lambda /2))\vert^2}{\vert
    \Gamma(i\lambda /2)
    \Gamma(1+(i\lambda /2))\vert^2} \, d\lambda.
    \end{equation}
By a computation involving $\Gamma$ functions,
particularly the identity
$-z\Gamma (-z)\Gamma (z)=\pi \mathop{\mathrm{cosec}} (\pi z)$,
one can
reduce (\ref{Eq5.2}) to $\pi_0(d\lambda ) =\lambda \tanh (\pi \lambda)
d\lambda$, so the generalized Fourier transform $\hat f(\lambda
)=\int_0^\infty f(x)\phi_\lambda (x)m(x)\,dx$ reduces to the
Mehler--Fock transform of order zero. Note that $\lambda =i/2$ gives the
trivial character, which is not in the support of $\pi_0$. 

(ii) Definition \ref{Def5.1} gives the Laplace representation. We now observe that
  \[ \int_{-x}^x \frac{\cosh (y/2)\,dy}{\sqrt{\cosh x-\cosh y}}
      =\int_{-x}^x \frac{\cosh (y/2)\, dy}{\sqrt{\sinh^2(x/2)-\sinh^2(y/2)}}
  \]
is bounded, so (\ref{Eq3.4}) holds, while (\ref{Eq3.6}) holds by hypothesis. Hence Proposition \ref{Prop3.5} gives $\Vert \phi_A(x)\Vert_{{\cal L}(E)}\leq \kappa $.

(iii) Given that the hypergroup convolution
$\ast$ exists, we can apply Proposition \ref{Prop3.5}.

(iv) Whereas $\phi_0(x)$ can be expressed in terms of Jacobi's complete
elliptic integral of the first kind with modulus $i\sinh (x/2)$, we require only the formula
   \begin{equation*}\label{Eq5.3}
      \phi_0(x)={\frac{1}{\pi}}\int_0^x{\frac{dy}{\sqrt{\sinh^2(x/2)-\sinh^2(y/2)}}} 
               \leq {\frac{2\sqrt{2x}}{\pi\sqrt{\sinh (x/2)}}}.
   \end{equation*}
From the differential equation (\ref{Eq4.5}), we obtain $\phi_0(x)=O(xe^{-x/2})$ as $x\rightarrow\infty$, so $\phi_0\in L^\nu (\sinh x)$ for all $2<\nu <\infty$. Hence we can apply Theorem \ref{Thm3.7}. 

\end{proof}

\begin{example}\label{Ex5.3} One can compute the transforms of polynomials in ${\mathrm{sech}}\, (x/2)$ by contour integration. For example, one can adapt the formulae in \cite{Sneddon:1972} to obtain the array of Mehler--Fock transforms
  \[\begin{array}{rl}
f(x)&\qquad  \hat f(\lambda )\cr
{\mathrm{sech}}\, (x/2)&\qquad (2/\lambda ){\mathrm{cosech}}\,(\pi\lambda )\cr
({\mathrm{sech}}\, (x/2))^{3} &\qquad  8\lambda {\mathrm{cosech}} (\pi \lambda )\cr
({\mathrm{sech}}\,(x/2))^5&\qquad (16/3)\lambda^3{\mathrm{cosech}}\,(\pi\lambda )
 \end{array}
 \]
in which the last two transforms are bounded and holomorphic on $V_{\phi, 1}$ for all $0<\phi<\pi/2$. Likewise, any positive even power $({\mathrm{sech}}\, (x/2))^\nu $ transforms to a constant multiple of $\lambda^{\nu-2} {\mathrm{sech}}(\pi \lambda )$.
\end{example}

In the Cauchy problem for the Euclidean wave equation in space dimension
three, the solution can have one order of differentiability fewer than
the initial data, due to the possible formation of caustics. Hence it
is natural to apply fractional integration operators to the cosine
families which address this
possible loss of smoothness, and the order of the
fractional integration required
can depend directly upon the dimension. The operators that we require are described
in the following lemma.

\begin{definition}\label{Def5.4} The fractional integration operators
$W_\alpha$ and $U_\beta$ are
defined on $C^\infty ({\bf R})$ by
 \begin{align*}
    W_\alpha f(x) &= \frac{1}{\Gamma (\alpha )}
      \int_0^x (\cosh x-\cosh t)^{\alpha -1} \sinh t \, f(t)\, dt, \\
    U_\beta f(x) &= \frac{1}{\Gamma (\beta )}
       \int_0^x (\cosh x-\cosh t)^{\beta -1} f(t)\, dt,
\end{align*}
where $\alpha$ and $\beta$ are the orders of $W_\alpha$ and $U_\beta$,
such that $\Re \alpha >0$ and $\Re \beta >0$.
\end{definition}

\begin{lemma}\label{Lem5.5}
\begin{enumerate}
\item
Let $Df=f'$. Then the operators satisfy
  \[ W_\alpha W_\beta =W_{\alpha +\beta}, \quad
  W_\alpha U_\beta =U_{\alpha +\beta}, \quad
  \mathop{\mathrm{cosech}} x \,DW_1=I, \quad DU_1=I. \]
\item For $\nu\in {\bf Z}$ such that $\nu\geq 0$ and $\lambda\in {\bf R}$, the associated
 Legendre function satisfies
  \begin{equation}\label{Eq5.4}
     U_{\nu+1/2}(\cos (x\lambda ) )=\sqrt\frac{\pi}{2}
     \frac{\Gamma (1/2+i\lambda -\nu)}{\Gamma (1/2+i\lambda +\nu)} (\sinh x)^\nu 
                  P_{i\lambda -1/2}^\nu (\cosh x),
  \end{equation}
where the quotient of Gamma functions is a rational function of $\lambda$, and
   \begin{equation}\label{Eq5.5} 
   W_{\nu-1/2}(\cos (x\lambda ))
    =\frac{d}{dx} U_{\nu+1/2} (\cos (x\lambda))\qquad (\nu\in {\bf N} ).
   \end{equation}
\end{enumerate}
\end{lemma}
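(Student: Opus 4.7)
My plan for Lemma~\ref{Lem5.5} is to treat each part separately: part (i) by direct computation with the integral definitions, and part (ii) by an inductive application of part (i) starting from the representation of $\phi_\lambda$ in Definition~\ref{Def5.1}.

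For part (i), the identities $DU_1 = I$ and $\mathop{\mathrm{cosech}}\,x\cdot DW_1 = I$ are immediate from the fundamental theorem of calculus, since $U_1f(x) = \int_0^x f(t)\,dt$ and $W_1f(x) = \int_0^x f(t)\sinh t\,dt$. The semigroup identities $W_\alpha W_\beta = W_{\alpha+\beta}$ and $W_\alpha U_\beta = U_{\alpha+\beta}$ follow from Fubini's theorem to interchange the order of integration, after which the substitution $u = (\cosh t-\cosh s)/(\cosh x-\cosh s)$ converts the inner integral into a Beta integral $B(\alpha,\beta) = \Gamma(\alpha)\Gamma(\beta)/\Gamma(\alpha+\beta)$.

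For part (ii), the base case $\nu=0$ of (\ref{Eq5.4}) is a direct rewriting of the integral representation
  \[
   \phi_\lambda(x) = P_{i\lambda-1/2}(\cosh x) = \frac{\sqrt{2}}{\pi}\int_0^x \frac{\cos\lambda y}{\sqrt{\cosh x-\cosh y}}\,dy
  \]
from Definition~\ref{Def5.1}, since this equals $\sqrt{\pi/2}\cdot U_{1/2}(\cos(\cdot\lambda))(x)$. For the inductive step, I would apply $W_1$ to both sides of (\ref{Eq5.4}) and use $W_1U_{\nu+1/2} = U_{\nu+3/2}$ from part (i); this reduces the problem to evaluating $\int_0^x(\sinh t)^{\nu+1}P^\nu_{i\lambda-1/2}(\cosh t)\,dt$. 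The crucial ingredient is the derivative identity
  \[
   \frac{d}{dx}\bigl[(\sinh x)^{\nu+1}P^{\nu+1}_\zeta(\cosh x)\bigr]
     = (\zeta-\nu)(\zeta+\nu+1)(\sinh x)^{\nu+1}P^\nu_\zeta(\cosh x),
  \]
which one obtains by differentiating Legendre's equation $\nu$ times together with the Rodrigues-type formula $P^\mu_\zeta(z) = (z^2-1)^{\mu/2}\,d^\mu P_\zeta(z)/dz^\mu$. Integrating from $0$ to $x$ telescopes the resulting integral, and simplifying $(\zeta-\nu)(\zeta+\nu+1)$ at $\zeta = i\lambda-1/2$ using $\Gamma(w+1) = w\Gamma(w)$ reproduces the Gamma-function quotient in (\ref{Eq5.4}).

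For (\ref{Eq5.5}), once (\ref{Eq5.4}) is established I would differentiate the closed form of $U_{\nu+1/2}(\cos(x\lambda))$ and apply the same derivative identity at index $\nu-1$, obtaining an explicit expression in terms of $(\sinh x)^\nu P^{\nu-1}_{i\lambda-1/2}(\cosh x)$. Matching this with a parallel evaluation of $W_{\nu-1/2}(\cos(x\lambda))$, obtained via integration by parts combined with the operator identity $DW_\alpha = \sinh x\cdot W_{\alpha-1}$ that follows from part (i), then yields (\ref{Eq5.5}). The main obstacle is the bookkeeping with Gamma-function prefactors and the selection of the correct form of the associated Legendre recurrence; the substantive analytic input is concentrated in the base case of (\ref{Eq5.4}) and the single derivative identity derived above.
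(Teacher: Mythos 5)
Your part (i) and your inductive proof of (\ref{Eq5.4}) are correct, and they are more self-contained than the paper's own treatment: the paper disposes of (i) by citation to \cite[Lemma 5.2]{Lax:1982} and \cite[Theorem 5.2]{Trimeche:1981}, and of (\ref{Eq5.4}) by citing the Mehler--Dirichlet formula in \cite{Sneddon:1972}. Your Fubini/Beta-function computation for the composition laws, the base case $\nu=0$ from Definition~\ref{Def5.1}(ii), and the induction via $W_1U_{\nu+1/2}=U_{\nu+3/2}$ together with the recurrence obtained by differentiating Legendre's equation $\nu$ times all check out, including the Gamma-function bookkeeping (the factor $(\zeta-\nu)(\zeta+\nu+1)$ at $\zeta=i\lambda-\frac12$ converts the quotient for index $\nu$ into the one for index $\nu+1$).

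The gap is in your final step for (\ref{Eq5.5}): the two computations you propose to ``match'' produce genuinely different functions, so the match cannot close. Differentiating the closed form (equivalently, using $U_{\nu+1/2}=W_1U_{\nu-1/2}$ and $\mathop{\mathrm{cosech}} x\,DW_1=I$ from part (i)) gives, exactly as you say,
\[ \frac{d}{dx}\,U_{\nu+1/2}(\cos (x\lambda ))=\sinh x\; U_{\nu-1/2}(\cos (x\lambda ))
   =\sqrt{\frac{\pi}{2}}\,\frac{\Gamma (3/2+i\lambda -\nu)}{\Gamma (i\lambda +\nu -1/2)}\,(\sinh x)^{\nu}\,P^{\nu -1}_{i\lambda -1/2}(\cosh x); \]
but the ``parallel evaluation'' of the left-hand side of (\ref{Eq5.5}) does not produce this. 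Integration by parts gives instead
\[ W_{\nu-1/2}(\cos (x\lambda ))=\frac{(\cosh x-1)^{\nu -1/2}}{\Gamma (\nu +1/2)}
      -\lambda\, U_{\nu+1/2}(\sin (x\lambda )), \]
which carries a boundary term and the sine (not cosine) transform. These differ already in the simplest case $\nu=1$, $\lambda=0$: the left side of (\ref{Eq5.5}) is $2\pi^{-1/2}\sqrt{\cosh x-1}\sim\sqrt{2/\pi}\,x$, while the right side is $\sinh x\,U_{1/2}(1)(x)\sim\sqrt{\pi /2}\,x$ as $x\rightarrow 0+$. So no amount of Gamma-function bookkeeping will complete your argument: what your computation actually establishes is
\[ \mathop{\mathrm{cosech}} x\,\frac{d}{dx}\,U_{\nu+1/2}(\cos (x\lambda ))=U_{\nu-1/2}(\cos (x\lambda )),
   \qquad
   \mathop{\mathrm{cosech}} x\,\frac{d}{dx}\,W_{\nu+1/2}(\cos (x\lambda ))=W_{\nu-1/2}(\cos (x\lambda )), \]
and not the printed identity (\ref{Eq5.5}). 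To be fair, the paper's own one-line justification (``we obtain (\ref{Eq5.5}) by differentiating'') suffers from exactly the same defect, which suggests (\ref{Eq5.5}) itself needs correction (for instance, replacing $U_{\nu+1/2}$ by $W_{\nu+1/2}$ and inserting a factor $\mathop{\mathrm{cosech}} x$); but a blind proof attempt should have carried the matching through and flagged the discrepancy, rather than asserting that it yields the stated formula.
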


\begin{proof}
(i) This is essentially contained in the statement and proof of \cite[Lemma 5.2]{Lax:1982}. 
See also \cite[Theorem 5.2]{Trimeche:1981}.

(ii) The identity (\ref{Eq5.4}) is known as the Mehler--Dirichlet formula \cite[p.~373, 381]{Sneddon:1972}, from which we obtain (\ref{Eq5.5}) by differentiating.
\end{proof}

For these operator families, we have the following result.

\begin{proposition}\label{Prop5.6} Suppose that $(\cos (tA))_{t\in {\bf R}}$ is strongly continuous cosine family on a Banach space $E$.
 Suppose that there exists $M>0$ such that $\Vert \cos (tA)\Vert_{{\cal L}(E)}\leq M\cosh (t/2)$ for all $t\in
{\bf R}$. Then $(U_{1/2}(\cos (tA))_{t\in {\bf R}}$ is a bounded family of operators.
\end{proposition}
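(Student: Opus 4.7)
The plan is to identify $U_{1/2}(\cos(\cdot A))(x)$ as a scalar multiple of the operator $\phi_A(x)$ from the Mehler--Fock hypergroup, which by Proposition \ref{Prop5.1}(ii) is already known to be uniformly bounded in $x$. All the work is in comparing the two integral expressions; no new estimate is required.

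First I would expand the definition using Definition \ref{Def5.4} with $\beta = 1/2$ and $\Gamma(1/2) = \sqrt{\pi}$: for each $x > 0$,
\[
U_{1/2}(\cos(\cdot A))(x) = \frac{1}{\sqrt{\pi}} \int_0^x \frac{\cos(sA)}{\sqrt{\cosh x - \cosh s}}\, ds.
\]
Convergence of this integral in the strong operator (Bochner) sense follows from the hypothesis $\Vert \cos(sA)\Vert_{\mathcal{L}(E)} \le M \cosh(s/2) \le M \cosh(x/2)$ on $[0,x]$, together with the integrable square-root singularity of $(\cosh x - \cosh s)^{-1/2}$ at $s = x$.

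Next I would symmetrize using the evenness $\cos((-s)A) = \cos(sA)$ and compare with the Laplace representation of the Legendre function $\phi_\lambda$ recorded in Definition \ref{Def5.1}(2). Reading off the representing measure
\[
\tau_x(ds) = \frac{1}{\pi \sqrt{2}\, \sqrt{\cosh x - \cosh s}}\, ds
\]
and applying the definition (\ref{Eq3.5}) of $\phi_A(x)$, a short bookkeeping of the constants yields
\[
U_{1/2}(\cos(\cdot A))(x) = \sqrt{\pi/2}\, \phi_A(x).
\]
Proposition \ref{Prop5.1}(ii) then gives $\Vert \phi_A(x)\Vert_{\mathcal{L}(E)} \le M$ uniformly in $x > 0$, so
\[
\Vert U_{1/2}(\cos(\cdot A))(x)\Vert_{\mathcal{L}(E)} \le M\sqrt{\pi/2} \qquad (x > 0),
\]
which is the desired uniform bound.

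There is no serious obstacle here: the entire content of the argument is the observation that the kernel $(\cosh x - \cosh s)^{-1/2}$ defining the fractional integration operator $U_{1/2}$ is, up to normalization, precisely the density of the Laplace representation measure $\tau_x$ for the Mehler--Fock hypergroup. Equivalently, one could start from the scalar Mehler--Dirichlet formula (\ref{Eq5.4}) with $\nu = 0$, which reads $U_{1/2}(\cos(x\lambda)) = \sqrt{\pi/2}\, P_{i\lambda - 1/2}(\cosh x) = \sqrt{\pi/2}\, \phi_\lambda(x)$, and transfer this identity to the operator $A$ via the cosine functional calculus encoded in (\ref{Eq3.5}).
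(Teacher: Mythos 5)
Your proof is correct and takes essentially the same route as the paper: you identify $U_{1/2}(\cos(\cdot A))(x)$ with a constant multiple of the Mehler--Fock hypergroup operator $\phi_A(x)$ (the scalar case being the Mehler--Dirichlet formula, i.e.\ Lemma~\ref{Lem5.5} with $\nu=0$) and then invoke the uniform bound from Proposition~\ref{Prop5.1}(ii). If anything, your bookkeeping of the factor $\sqrt{\pi/2}$ is more careful than the paper's proof, which writes $\phi_A(t)=U_{1/2}(\cos(tA))$ with the normalizing constant suppressed.
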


\begin{proof} By Lemma~\ref{Lem5.5}, the trigonometric and Legendre functions of Definition~\ref{Def5.1} are
related by
  \[ \phi_\lambda (x)=\sqrt{\frac{2}{\pi}} U_{1/2}
  (\cos \lambda x),\quad
  \cos\lambda x=\frac{d}{dx} \sqrt{\frac{\pi}{2}}
  W_{1/2}(\phi_\lambda(x)).\]
In the notation of Proposition~\ref{Prop5.1},
 we have
$\phi_A(t)=U_{1/2}(\cos (tA))$, whence the result.
\end{proof}

\section{Geometrical applications}\label{Sec:Appl}

In this final section we shall look at certain Laplacian operators which occur naturally in differential geometry and show how the results of the earlier sections can be applied in these settings. For the wave equation associated with the Laplacian operator on a Riemannian manifold, the fundamental solutions travel at unit speed.
We can therefore accommodate the growth of balls by incorporating a suitable weight $m(x)$ in the functional calculus.

\begin{example}\label{Ex6.1} (i) As a model for hyperbolic space ${\cal{H}}^n$ of dimension $n\geq 2$, we use the upper half-space
  \[ {\cal{H}}^n=\{ x=(\xi,t): \xi\in {\bf R}^{n-1}, t>0\} \]
with metric $dx^2=t^{-2}(d\xi^2+dt^2)$ and volume measure ${\mathrm{vol}}_{\mathcal{H}} (dx)=t^{-n}dtd\xi$. 
Let $S(x,r)$ be the hyperbolic sphere of radius $r$ and centre $x$. 
  The Laplacian in geodesic polars at $x$ is 
\[\Delta =-\frac{\partial^2}{\partial r^2}-(n-1) \mathop{\mathrm{coth}} r \frac{\partial}{\partial r} +\Delta_{S(x,r)}\]
 where $\Delta_{S(x,r)}$ is the Laplacian on $S(x,r)$. We restrict attention to radial functions depending on $r$. The corresponding hypergroup on $(0, \infty )$ is
  \begin{equation} 
  \varepsilon_r\ast\varepsilon_s
  =   {\frac{\Gamma (n/2)}{\sqrt{\pi} \Gamma ((n-1)/2)}}
      \int_0^\pi \varepsilon_{\cosh^{-1} (\cosh r\cosh s+\sinh r\sinh s\cos \theta)} \sin^{n-2}\theta\, d\theta.
      \end{equation}
and the invariant measure is $\sinh^{n-1}r\, dr$.

(ii) Let 
   \begin{equation}
   \sigma_\kappa (r)
       ={\frac{n\pi^{n/2}}{\Gamma (n/2+1)}}\Bigl({\frac{\sinh r\sqrt{-\kappa}}{\sqrt{-\kappa}}}\Bigr)^{n-1}
              \qquad (\kappa <0).
    \end{equation}
and $m_{\kappa}(r) = \int_0^r \sigma_\kappa (s)\,ds$. 
When $x=(\xi,1)$, $S(x,r)$ is also a Euclidean sphere of centre $(\xi,\cosh r)$ and radius $\sinh r$ and
hence has area $\sigma_{-1}(r)$; see \cite{Chavel:2001}.

(iii) The functions $\log \sigma_{-1} (x)$ and 
$\log m_{-1}(x)$ are concave on $(0, \infty )$ for all $n\in {\bf N}$. To see this for $\log m_{-1}(x)$ we write
   \[ h_0(x)=n\cosh x\int_0^x \sinh^nt\, dt-\sinh^{n+1}x\qquad (x\geq 0),\]
and compute
   \[ \frac{d^2}{dx^2}\log m_{-1}(x)= {\frac{\sinh^{n-1}x}{\bigl(\int_0^x \sinh^nt\, dt\bigr)^2}} h_0(x)\qquad (x>0),\]
so it suffices to prove that $h_0(x)\leq 0$ for all $x>0$. Since $h_0(0)=0,$ it suffices to show that $h_0'(x)\leq 0$ for $x\geq 0$. For $n=1$, this is easy to check. For $n\geq 2,$ we have
$h_0'(x)/\sinh x=h_1(x),$ where
   \[ h_1(x)=n\int_0^x \sinh^nt\, dt-\cosh x\sinh^{n-1}x\qquad (x\geq 0).\] 
Now $h_1(0)=0$ and $h_1'(x)=-(n-1)\sinh^{n-1}x\leq 0$, so $h_1(x)\leq 0$ for $x\geq 0$; hence $h_0'(x)\leq 0$, and so $h_0(x)\leq 0$, as required. This shows that hyperbolic space satisfies all the hypotheses of Proposition \ref{Prop6.3} below.
\end{example}

\begin{proposition}\label{Prop6.2}
For $2\leq \nu\leq \infty$, $\alpha =(\nu-2)(n-1)/(2\nu)$ and 
\[\max \{n\nu /((n+1)\nu +2-2n),1\}<p<\nu/(\nu-2)\] the integral 
  \begin{equation*}
       \int_0^\infty f(t) U_{\alpha +1}(\cos t\sqrt{\Delta}) \, dt
  \end{equation*}
defines a bounded linear operator on $L^\nu({\mathrm{vol}}_{\mathcal{H}})$ for all $f\in L^p(\sinh^{n-1} t\, dt)$.
\end{proposition}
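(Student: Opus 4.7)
The plan is to apply Theorem~\ref{Thm4.3} to the Sturm--Liouville hypergroup on $[0,\infty)$ with weight $m(t)=\sinh^{n-1}t$, namely the radial structure on $\mathcal{H}^n$ identified in Example~\ref{Ex6.1}. By Definition~\ref{Def4.1} and Lemma~\ref{Lem4.2}, this weight satisfies $(H(\omega_0))$ with $\omega_0=(n-1)/2$, because $m'(t)/m(t)=(n-1)\coth t$ decreases to $n-1$; its characters $\phi_\lambda$ are the hyperbolic spherical functions, admit a Laplace representation satisfying (\ref{Eq3.4}), and $\phi_0 \in L^\nu(m)$ for all $\nu>2$ since $\phi_0(t)\sim t\,e^{-(n-1)t/2}$ while $m(t)\sim e^{(n-1)t}/2^{n-1}$ as $t\to\infty$.

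Step one is to bound the cosine family on $E=L^\nu(\mathrm{vol}_\mathcal{H})$. Interpolating the isometric $L^2$ estimate (coming from self-adjointness of $\sqrt{\Delta}$, whose $L^2$ spectrum lies in $[(n-1)/2,\infty)$) against an $L^\infty$-type estimate obtained from the finite propagation speed of the wave kernel combined with the exponential volume growth $\mathrm{vol}_\mathcal{H}(B_r)\sim e^{(n-1)r}$ gives
\begin{equation*}
 \Vert\cos(t\sqrt{\Delta})\Vert_{{\cal L}(L^\nu(\mathrm{vol}_\mathcal{H}))}\leq\kappa\cosh(t\omega_0),
\end{equation*}
which is exactly hypothesis (\ref{Eq4.7}) of Theorem~\ref{Thm4.3} for this $\omega_0$.

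The key step is to identify $U_{\alpha+1}(\cos t\sqrt{\Delta})$, up to a constant and a power of $\sinh t$, with the operator-valued spherical function $\phi_{\beta\sqrt{\Delta}}(t)$ of the above hypergroup, for a rescaling $0<\beta<1$ matched to $\alpha$. This is the $n$-dimensional extension of the Mehler--Dirichlet formula from Lemma~\ref{Lem5.5}(ii): fractional integration of order $\alpha+1$ converts $\cos(\lambda t)$ into the associated Legendre function $P^{\mu}_{i\lambda-1/2}(\cosh t)$ that parametrises the spherical functions on $\mathcal{H}^n$, and the value $\alpha=(\nu-2)(n-1)/(2\nu)$ is dictated by matching the $L^\nu$-growth exponent of the cosine family to the hypergroup parameter $\omega_0$. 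With this identification, the integral in the proposition reduces up to constants to $T_{\beta\sqrt{\Delta}}(f)$ from Proposition~\ref{Prop3.5}, and Theorem~\ref{Thm4.3}(iv) delivers the claimed boundedness from $L^p(\sinh^{n-1}t\,dt)$ to ${\cal L}(L^\nu(\mathrm{vol}_\mathcal{H}))$.

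The main obstacle is the Mehler--Dirichlet-type identification in arbitrary dimension with the prescribed half-integer fractional integration order, together with the bookkeeping required to recover the precise range $\max\{n\nu/((n+1)\nu+2-2n),1\}<p<\nu/(\nu-2)$ and the specific value of $\alpha$ from the Sturm--Liouville parameters; the upper endpoint $p<\nu/(\nu-2)$ in particular reflects the integrability threshold for $\phi_0$ against $m$, while the lower endpoint encodes the wave-kernel smoothing coming from the $U_{\alpha+1}$ factor.
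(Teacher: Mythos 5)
Your Step one contains a fatal gap: the bound $\Vert \cos(t\sqrt{\Delta})\Vert_{{\cal L}(L^\nu(\mathrm{vol}_{\mathcal H}))}\leq \kappa\cosh(t\omega_0)$ is false for $\nu\neq 2$ when $n\geq 2$. This is precisely obstruction (ii) flagged in the introduction: fixed-time wave propagators are unbounded on $L^p$ for $p\neq 2$ (the local Euclidean obstruction of caustics applies on ${\cal H}^n$ just as on ${\bf R}^n$). Your proposed interpolation cannot be run because there is no $L^\infty$ estimate for $\cos(t\sqrt{\Delta})$ to interpolate against: its Schwartz kernel is a distribution concentrated on the sphere $\{\rho(x,y)=t\}$ involving derivatives of the surface measure — finite propagation speed and volume growth control supports, not operator norms. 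Indeed the explicit solution formulas in the paper's proof show that even the once-smoothed propagator $U_1(\cos t\sqrt{\Delta})=\sin(t\sqrt{\Delta})/\sqrt{\Delta}$ requires $(n-2)/2$ or $(n-3)/2$ further derivatives of ball/sphere averages, so it too fails to be bounded on $L^\infty$ for $n\geq 4$. If your claimed bound (\ref{Eq4.7}) on $E=L^\nu$ were true, the order $\alpha$ in Proposition~\ref{Prop6.2} would not need to depend on $\nu$ and $n$ at all; the entire point of the proposition is that smoothing of order $\alpha+1$ is what compensates for the failure of that bound.

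The paper's proof does not route through Theorem~\ref{Thm4.3}. It treats $T(\alpha;t)=\Gamma(\alpha+1)U_{\alpha+1}(\cos t\sqrt{\Delta})$ as an analytic family of operators: at $\Re\alpha=0$ it is bounded on $L^2(\mathrm{vol}_{\mathcal H})$ by the spectral theorem, and at $\Re\alpha=(n-1)/2$ the kernel becomes an honest bounded function, giving an $L^\infty$ bound by $\mathrm{vol}_{\mathcal H}(B(x,t))\leq M_n t\sinh^{n-1}t$; Stein's interpolation theorem then yields $\Vert T(\alpha;t)\Vert_{{\cal L}(L^\nu)}\leq Ct\sinh^{(n-1)(1-2/\nu)}t$ exactly at $\alpha=(\nu-2)(n-1)/(2\nu)$, and H\"older's inequality in $t$ produces the stated range of $p$. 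Your proposed identification of $U_{\alpha+1}(\cos t\sqrt{\Delta})$ with a rescaled character $\phi_{\beta\sqrt{\Delta}}(t)$ also has problems independent of Step one: the Mehler--Dirichlet formula (\ref{Eq5.4}) is stated for half-integer orders, whereas $\alpha+1$ here is generically not one; it carries a $\lambda$-dependent Gamma quotient, so the two operators differ by a nontrivial function of $\sqrt{\Delta}$ whose $L^\nu$ boundedness is again not free; and the rescaling parameter in Theorem~\ref{Thm3.7}(iii)/\ref{Thm4.3}(iv) multiplies the generator (slowing propagation speed), which is a mechanism entirely different from fractional integration and cannot be ``matched'' to the smoothing order $\alpha$.
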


\begin{proof}
For $n$ even let
  \begin{equation*} 
     w(x,t)=\frac{\pi^{-n/2}}{2^{(n+1)/2}}\Bigl( \frac{1}{\sinh t}\frac{\partial}{\partial t}\Bigr)^{(n-2)/2}
                 \int_{B(x,t)} \frac{u(y)\,{\mathrm{vol}}_{\mathcal{H}} (dy)} {\sqrt{\cosh t-\cosh \rho (x,y)}}
  \end{equation*}
where $\rho (x,y)$ denotes the hyperbolic distance between $x$ and $y$; for $n$ odd, 
let
  \begin{equation*} 
     w(x,t)=\frac{\pi^{(1-n)/2}}{2^{(n+1)/2}}\Bigl( \frac{1}{\sinh t}\frac{\partial}{\partial t}\Bigr)^{(n-3)/2} 
        \frac{1}{\sinh t} \int_{S(x,t)} u(y)\,{\mathrm{area}}_{S(x,t)}(dy)
 \end{equation*}
where ${\mathrm{area}}_{S(x,t)}$ is the area measure on $S(x,t)=\partial B(x,t)$.  Then $w$ satisfies the wave equation on hyperbolic space with
  \[ \frac{\partial^2}{\partial t^2}w(x,t)=-\Delta w(x,t)  \]
with $w(x,0)=0$ and $\frac{\partial w}{\partial t}(x,0)=u(x)$. Hence we can write 
  \begin{equation*}
    w(x,t) = U_1(\cos (t\sqrt{\Delta})) u= \frac{ \sin t\sqrt{\Delta}}{\sqrt{\Delta}} u
  \end{equation*}
so that $U_{\alpha +1}=W_{\alpha }U_{1}$ by Lemma \ref{Lem5.5}, and proceed to bound these operators. 

The family of operators 
   \begin{equation*}
   T(\alpha ;t)=\Gamma (\alpha +1)U_{\alpha +1}(\cos (t\sqrt{\Delta }))
   \end{equation*} 
is bounded and analytic on  $\{\alpha :0<\Re \alpha <(n-1)/2\}$ in the sense that 
  \begin{equation*}
     \alpha \mapsto\int_{{\mathcal{H}}^n} T(\alpha ;t)f(x)g(x) \,\mathrm{vol}_{\mathcal{H}}(dx)
  \end{equation*}
is analytic for all $t>0$ and all compactly supported smooth functions $f$ and $g$, and bounded and continuous on $\{\alpha :0\leq \Re \alpha \leq (n-1)/2\}$ for all $t>0$. Indeed, the operator
  \begin{equation*} 
      T(i\tau ;t): f\mapsto \int_0^t (\cosh t-\cosh s)^{i\tau} \cos s\sqrt{\Delta} f(x)\, ds
  \end{equation*}
is bounded on $L^2({\mathrm{vol}}_{\mathcal{H}})$ by the spectral theorem. Also, writing $\rho (x,y)=s$, we have an operator on $L^\infty$
  \begin{equation*} 
       T((n-1)/2+i\tau ;t):f\mapsto \int_{B(x,t)}(\cosh t-\cosh s)^{i\tau}f(y)\,{\mathrm{vol}}_{\mathcal{H}}(dy)
  \end{equation*}
with norm bounded by
$\mathrm{vol}_{\mathcal{H}}(B(x,t)) \leq M_n t \sinh^{n-1} t$ for some $M_n>0$ and all $t\geq 0$.

By Stein's interpolation theorem \cite[p. 69]{Stein:1970}, $T(\alpha ;t)$ and hence $U_{\alpha +1}(\cos t\sqrt{\Delta})$ are bounded linear operators on $L^\nu({\mathrm{vol}}_{\mathcal{H}})$ for $1/\nu=\theta /2+(1-\theta )/\infty$ and $\alpha =0\theta +(1-\theta )(n-1)/2$, with norm
   \begin{align*}
    \Vert T(\alpha ;t)\Vert_{{\cal L}(L^\nu)}
       &\leq C\sup_\tau \Vert T(i\tau ;t)\Vert_{{\cal L}(L^2)}^\theta 
                  \sup_\tau \Vert T((n-1)/2+i\tau ;t)\Vert_{{\cal L}(L^\infty)}^{1-\theta}  \\
       &\leq C t \sinh^{(n-1)(1-\theta )}t
   \end{align*}
for some $C>0$. Now take $p$ as above, and observe that for $f\in L^p(\sinh^{n-1} t\, dt)$ we have
  \begin{multline}\label{Eq6.12}
   \int_0^\infty t\sinh^{(n-1)(1-2/\nu)}t\, \vert f(t)\vert \, dt\\
      \leq \Bigl( \int_0^\infty \vert f(t)\vert^p\sinh^{n-1} t\, dt\Bigr)^{1/p}\Bigl( \int_0^\infty t^{p/(p-1)}\sinh^{-r}t\, dt\Bigr)^{(p-1)/p},
  \end{multline}
where $r=p(n-1)(1/p-1+2/\nu )/(p-1)>0$ since $1/p-1+2/\nu>0$, and $p/(p-1)-r>-1$ since 
\[{\frac{p}{p-1}}-r+1={\frac{p}{p-1}}\Bigl( {\frac{n\nu +\nu -2n+2}{\nu }} -{\frac{n}{p}}\Bigr) >0\]
so the final integral in (\ref{Eq6.12}) converges. 
When $n$ is even, $U_{(n+1)/2}(\cos (t\sqrt {\Delta }))$ is given in terms of associated Legendre functions by (\ref{Eq5.4}).
\end{proof}

The preceding example is the fundamental basis for comparison, as follows.
Let ${\cal{M}}$ be a complete Riemannian
manifold of dimension $n$ with metric $\rho$ that has 
injectivity radius bounded below by some $r_0>0$. This
means that
the exponential map is injective on the tangent space above the ball 
$B(x, r_0)=\{ y\in {\cal{M}}: \rho (x,y)\leq r_0\}$
for all $x\in {\cal{M}}$; see \cite{CheegerGT:1982}. For fixed $x_0\in {\cal{M}}$, we
can use $\rho (x,x_0)$ as the radius in a system of polar coordinates with centre $x_0$,
noting that $\rho$ is not differentiable on the cut locus. Let ${\mathrm{vol}}$ be the Riemannian volume measure, and 
for an open subset $\Omega$ with compact closure, let $\Omega_\varepsilon=\{ x\in {\cal M}:\exists y\in \Omega : \rho (x,y)\leq \varepsilon\}$ be its $\varepsilon$-enlargement for $\varepsilon > 0$. Then let the outer Hausdorff measure of the boundary $\partial\Omega$ of $\Omega$ be  
    \[  \mathrm{area}(\partial\Omega )
    = \lim \sup_{\varepsilon\rightarrow 0+} \varepsilon^{-1}(\mathrm{vol}(\Omega_\varepsilon )-\mathrm{vol}(\Omega)).
    \] 
In particular, let $\sigma (x_0,r)=\mathrm{area}(\partial B(x_0, r))$ be the surface area of a sphere, and $m(x_0,r)={\mathrm{vol}}(B(x_0,r))$ the volume of a ball.

The Laplace operator $\Delta$ is essentially self-adjoint on $C_c^\infty ({\cal{M}}; {\bf C})$
by Chernoff's theorem
\cite{Chernoff:1973}, so we can define functions of $\sqrt{\Delta}$ via the spectral theorem in
$L^2({\cal{M}},{\hbox{vol}})=L^2({\cal{M}})$. By the spectral theorem, one can define the group of imaginary powers $\Delta ^{i\tau }$ which forms a $C_0$ group on $L^2({\cal M})$. Furthermore, $\Delta ^{i\tau }$ extends to define a $C_0$ group on $L^p({\cal M})$ for $1<p<\infty$, as discussed in \cite{Strichartz:1982}, especially Theorem 4.5. Hence Proposition~\ref{Prop2.2} applies to $A^{i\tau} =\Delta ^{i\tau}$.

Then by \cite[(1.17)]{CheegerGT:1982}, for any smooth radial function $g(r)$, the Laplace operator satisfies 
   \begin{equation}
       \Delta g=-g''(r)-{\frac {\sigma'(x_0,r )} {\sigma (x_0,r )}} g'(r).
   \end{equation} 
We formulate conditions under which this differential operator on $(0, \infty )$ lies in the scope of section 4. 
Condition (i) of Definition~\ref{Def4.1} relates to local geometrical properties with small $r>0$; whereas (ii) relates to global geometry and large $r$.

For $r_0>\delta>0$, the modified Cheeger constant \cite{Chavel:2001} is
   \begin{equation}
       I_{\infty ,\delta}({\cal M})=\inf \Bigl\{ {\frac { {\mathrm{area}}(\partial \Omega)}{{\mathrm{vol}}(\Omega )}}: \Omega\Bigr\}
   \end{equation}
where the infimum is taken over all the open subsets $\Omega$ of ${\cal M}$ that have compact closure, have smooth boundary $\partial\Omega$ and contain a metric ball of radius $\delta$. 

\begin{proposition}\label{Prop6.3}
Let the Riemannian manifold ${\cal M}$ be as above and suppose that
\begin{enumerate} 
  \item ${\cal M}$ is noncompact with Ricci curvature bounded below by ${\kappa }(n-1)$ where $\kappa <0$;
  \item the modified Cheeger constant satisfies $I_{\infty ,\delta}({\cal M}) > 0$ for some $\delta > 0$;
  \item $r\mapsto \log m(x_0,r)$ and $r\mapsto \log \sigma (x_0,r)$ are concave functions of $r\in (0, \infty )$. 
\end{enumerate}
Then $m(x_0,r)$ and $\sigma (x_0,r)$ satisfy conditions (i), (ii) and (iii) of Definition~\ref{Def4.1} with $2 \omega_0 \ge I_{\infty ,\delta}({\cal M})$.
\end{proposition}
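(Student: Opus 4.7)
The plan is to verify conditions (i), (ii), (iii) of Definition~\ref{Def4.1} in turn, for both the volume function $m(x_0,\cdot)$ (with $2\gamma+1=n$) and the area function $\sigma(x_0,\cdot)$ (with $2\gamma+1=n-1$), using the relation $\sigma(x_0,r)=m'(x_0,r)$ supplied by the coarea formula. Condition (i) will follow from a local expansion in geodesic normal coordinates, condition (ii) from combining the Cheeger lower bound with the concavity assumption, and condition (iii) is immediate from hypothesis (iii).

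For condition (i), I would work in geodesic normal coordinates about $x_0$, which are valid on a ball of radius $r_0$ by the injectivity radius hypothesis. Writing $x=r\theta$ with $\theta\in S^{n-1}$, the Riemannian volume density $\sqrt{\det g}(x)$ admits a smooth Taylor expansion in the Cartesian coordinates $x^i$, with leading correction $1-\tfrac{1}{6}\mathrm{Ric}_{ij}(x_0)x^ix^j+O(|x|^3)$. Integration over the unit sphere annihilates all odd-degree terms in $r$ by angular symmetry, and yields
\[
\sigma(x_0,r) = r^{n-1}\, q_\sigma(r), \qquad m(x_0,r) = r^n\, q_m(r),
\]
where $q_\sigma$ and $q_m$ are even smooth positive functions near the origin with $q_\sigma(0), q_m(0)>0$. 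Both choices of $\gamma$ satisfy $\gamma>-1/2$ since $n\ge 2$.

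For condition (ii), I combine the Cheeger and concavity hypotheses. For any $r>\delta$, the open ball $B(x_0,r)$ is a valid competitor in the definition of $I_{\infty,\delta}({\cal{M}})$, so
\[
\frac{m'(x_0,r)}{m(x_0,r)} = \frac{\sigma(x_0,r)}{m(x_0,r)} \ge I_{\infty,\delta}({\cal{M}}).
\]
Hypothesis (iii) says $(\log m(x_0,\cdot))' = m'/m$ is decreasing; being bounded below by this positive constant, it converges to a limit, which we define to be $2\omega_0\ge I_{\infty,\delta}({\cal{M}})$. Integrating the Cheeger bound gives $m(x_0,r)\ge m(x_0,\delta)e^{I_{\infty,\delta}(r-\delta)}\to\infty$. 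For the area quotient, write $\log\sigma=\log(m'/m)+\log m$; the first term tends to $\log(2\omega_0)$ and the second gives $\log m(r)/r\to 2\omega_0$, whence $(\log\sigma)(r)/r\to 2\omega_0$. Since hypothesis (iii) makes $(\log\sigma)'$ decreasing, an elementary comparison (a decreasing function whose indefinite integral grows asymptotically linearly with slope $L$ must itself tend to $L$) forces $\sigma'/\sigma\to 2\omega_0$, and $\sigma=(m'/m)\cdot m\to\infty$. Condition (iii) of Definition~\ref{Def4.1} is precisely hypothesis (iii) of the proposition, while the Ricci lower bound enters as a global constraint that, via Bishop--Gromov comparison, also caps $\omega_0$ from above by $(n-1)\sqrt{-\kappa}/2$.

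The main obstacle is the global $C^\infty$ regularity on all of ${\bf R}$ required by Definition~\ref{Def4.1}(i) for the functions $q_\sigma, q_m$: the normal-coordinate argument supplies this only within the injectivity radius, and past the cut locus the functions $\sigma(x_0,\cdot)$ and $m(x_0,\cdot)$ may a priori have weaker regularity. The Ricci bound together with the concavity hypothesis keep the radial geometry sufficiently tame that one expects a smooth extension to persist, but a rigorous completion of this step needs either a global regularity theorem for ball volumes in complete manifolds of bounded curvature, or else a mild relaxation of the smoothness axiom in Definition~\ref{Def4.1}.
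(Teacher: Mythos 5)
Your proposal is correct and follows essentially the same route as the paper's own proof: condition (i) via the local expansion of the volume density under the exponential map inside the injectivity radius (the paper invokes \cite[p.~82]{Chavel:1993} for precisely this), and conditions (ii)--(iii) by combining the Cheeger bound $\sigma(x_0,r)/m(x_0,r)\geq I_{\infty,\delta}({\cal M})$ for $r>\delta$ with the two log-concavity hypotheses, the limit $2\omega_0$ existing because $m'/m$ is decreasing and bounded below; your identification of $\lim \sigma'/\sigma$ through $\log\sigma=\log(m'/m)+\log m$ and a Ces\`aro-type comparison is a harmless variant of the paper's step that $m'=\sigma$ forces $\omega_1=\omega_0$. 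The cut-locus regularity obstacle you flag is genuine, but it is equally unaddressed in the paper, whose proof likewise produces $q$ only from the local expansion as $r\rightarrow 0+$, so on this point your attempt is no weaker than the published argument.
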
  

\begin{proof} First consider small $r>0$. By Bishop's comparison theorem \cite[p.~126]{Chavel:2001} and the 
local isoperimetric inequality with constant $S_D>0$ as in \cite[p.~130]{Chavel:2001}, there exists $r_0>0$ such that
  \begin{equation*}
       S_Dm(x_0,r)^{(n-1)/n}\leq \sigma (x_0,r)\leq \sigma_\kappa (r)\qquad (0<r<r_0).
  \end{equation*}  
So by integrating one obtains
   \begin{equation*}
       (S_D/n)^nr^n\leq m(x_0,r)\leq m_{\kappa}(r)\qquad (0<r<r_0),
   \end{equation*}  
where the right-hand side is $O(r^n)$ as $r\rightarrow 0+$, and 
   \begin{equation*}
       S_D(S_D/n)^{n-1} r^{n-1}\leq \sigma (x_0,r)\leq \sigma_{\kappa}(r)\qquad (0<r<r_0),
   \end{equation*}  
where $\sigma_\kappa (r)=O(r^{n-1})$ as $r\rightarrow 0+$.

We now consider the behaviour at $r=0$. We obtain the bounds
  \begin{equation*}
      \frac{S_D}{m_\kappa (r)^{1/n}}
        \leq \frac{\sigma (x_0,r)}{m(x_0,r)}
        \leq \frac{n^n\sigma_\kappa (r)}{S_D^nr^n}
                 \qquad (0<r<r_0).
   \end{equation*}
By \cite[(1.18)]{CheegerGT:1982}, there exist constants $c_1(n), c_2(n)>0$ such that
   \begin{equation*}
   0 < \frac{c_1(n)}{r} 
    \leq \frac{({d}/{dr})\sigma(x_0,r)}{\sigma (x_0,r)}
    \leq \frac{c_2(n)}{r}
               \qquad (0<r<r_0).
   \end{equation*}
The exponential map is injective on the tangent space above the ball 
$B(x, r_0)$, so we can express $\sigma (x_0, r)/r^{n-1}$ and $m(x_0,r)/r^n$ as $r\rightarrow 0+$ in terms of the metric tensor and the exponentials of tangent vectors as in \cite[p. 82]{Chavel:1993}. This local expansion gives $q(r)$ as 
$r\rightarrow 0+$, thus verifying that (i) of Definition~\ref{Def4.1} is satisfied.

For $r>\delta$ we have $\sigma (x_0,r)\geq I_{\infty, \delta}({\cal M})m(x_0,r)$, and so
  \begin{equation}\label{Eq6.20}
     m(x_0,r)\geq m(x_0,\delta )\exp \bigl((r-\delta )I_{\infty , \delta }({\cal M})\bigr)
       \qquad (r>\delta )
  \end{equation}
by a direct integration. Hence  
   \begin{align}\label{Eq6.21}
   \sigma (x_0,r) & \geq I_{\infty, \delta}m(x_0,r)   \notag\\
          &\geq I_{\infty, \delta}m(x_0,\delta )\exp \bigl((r-\delta )I_{\infty, \delta }({\cal M})\bigr)
             \qquad (r>\delta ).
   \end{align}

Since $\log m(x_0,r)$ is concave, $\sigma (x_0,r)/m(x_0,r)$ decreases with $r$,
and, by (\ref{Eq6.20}), $\sigma (x_0,r)/m(x_0,r)\rightarrow 2\omega_0$ as $r \to \infty$, where $2\omega_0\geq I_{\infty, \delta}({\cal M})>0$. 
This proves conditions (ii) and (iii) for $m(x_0,r)$.

Since $\log \sigma (x_0,r)$ is concave, $\sigma' (x_0,r)/\sigma (x_0,r)$ decreases with $r$. By (\ref{Eq6.21})
$\sigma (x_0,r)\rightarrow\infty$ as $r\rightarrow\infty$. Hence 
$\sigma'(x_0, r)\geq 0$ for all $r>0$, so $\sigma (x_0, r)$ increases to infinity as $r\rightarrow\infty$. Also, $\sigma' (x_0, r)/\sigma (x_0, r)\rightarrow 2\omega_1$ where $2\omega_1\geq I_{\infty, \delta}({\cal M})>0$ by 
(\ref{Eq6.21}). Since $m'=\sigma$, we deduce that $\omega_1=\omega_0$.
\end{proof}


\end{document}